\documentclass[a4paper,oneside,12pt]{amsart}

\usepackage[utf8]{inputenc}
\usepackage{latexsym}
\usepackage{amssymb}
\usepackage{amsmath}
\usepackage{stmaryrd} 

\usepackage{enumerate} 


\usepackage{hyperref}

\usepackage{pgf}

\usepackage{natbib}

\usepackage{mathtools,amsthm}
\usepackage{tikz}
\usetikzlibrary{decorations.pathreplacing}

\usepackage{ifpdf}
\usetikzlibrary{calc}
\usepackage{fp} 


\bibpunct[, ]{(}{)}{,}{a}{}{;}

\setlength{\unitlength}{1cm}


\hoffset=-60pt
\textwidth=480pt



\def\qed{\relax
   \ifmmode
    ~\hfill\Box
   \else
    \unskip\nobreak ~\hfill$\square$%
   \fi \par}

\newcommand{\sep}{$\cdot$ }
\def\keywords{\vspace{.5em}
{\noindent\textbf{Keywords}:\,\relax%
}}

\theoremstyle{definition} \newtheorem{cor}{Corollary}
\theoremstyle{definition} \newtheorem{conv}{Convention}
\theoremstyle{definition} \newtheorem{prop}{Proposition}
\theoremstyle{definition} 
\theoremstyle{definition} \newtheorem{rem}{Remark}
\theoremstyle{definition} \newtheorem{lemma}{Lemma}

\newcommand{\MM}{\mathfrak{M}} 
\newcommand{\mng}[2]{\left\llbracket#1\right\rrbracket^{#2}} 

\newcommand{\ie}{i.e., }
\newcommand{\eg}{e.g., }

\newcommand\defeq{\mathrel{\stackrel{\mathclap{\textsf{\tiny def}}}{=}}}




\makeatletter
\newcommand{\BIG}{\bBigg@{2}}
\newcommand{\BIGG}{\bBigg@{3}}
\newcommand{\vast}{\bBigg@{4}}
\newcommand{\Vast}{\bBigg@{5}}
\makeatother

\newcommand{\LL}{\mathcal{L}}

\newcommand{\Pred}{\ensuremath{\mathsf{Pred}}}
\newcommand{\Fm}{\ensuremath{\mathsf{Form}}}
\newcommand{\Var}{\ensuremath{\mathsf{Var}}}
\newcommand{\ar}{\ensuremath{\mathsf{ar}}}
\newcommand{\var}{\ensuremath{\mathsf{v}}}
\newcommand{\vi}{\ensuremath{\mathsf{v}_{i}}}
\newcommand{\vj}{\ensuremath{\mathsf{v}_{j}}}

\definecolor{axcolor}{rgb}{.3,0,.3}

\newcommand{\bexists}[2]{(\exists #1 \in #2 )} 
\newcommand{\bforall}[2]{(\forall #1 \in #2 )} 
 
\newcommand{\Bforall}[2]{\big(\forall #1 \in #2 \big)}

\newcommand{\AND}{\land}

\newcommand{\de}{\stackrel{\text{\tiny def}}{=}}
\newcommand{\defiff}{\ \stackrel{\text{\tiny{def}}}{\Longleftrightarrow}\ }

\usepackage{xspace}

\newcommand{\ax}[1]{{\ensuremath{\mathsf{#1}}}}
\newcommand{\sy}[1]{{\ensuremath{\mathsf{#1}}}}


\theoremstyle{definition} \newtheorem{thm}{Theorem}
\theoremstyle{definition} 
\theoremstyle{definition} \newtheorem{defn}{Definition}
 
\newcommand{\IOb}{\ensuremath{\mathit{IOb}}} 

\newcommand{\Q}{\ensuremath{\mathit{Q}}} 
\newcommand{\W}{\ensuremath{\mathit{W}}} 


\newcommand{\speed}{\mathit{speed}}
\newcommand{\Ether}{\ensuremath{\mathit{Ether}}\xspace}


\begin{document}
	\title{{\tiny -- Preprint --}\\ On variable
            non-dependence of first-order formulas}
        \author{Koen Lefever \and Gergely Sz{\'e}kely} \date{\today}
        \maketitle

\begin{abstract}
In this paper, we introduce a concept of non-dependence of variables in formulas. A formula in first-order logic is non-dependent of a variable if the truth value of this formula does not depend on the value of that variable. This variable non-dependence can be subject to constraints on the value of some variables which appear in the formula, these constraints are expressed by another first-order formula. After investigating its basic properties, we apply this concept to simplify convoluted formulas by bringing out and discarding redundant nested quantifiers. Such convoluted formulas typically appear when one uses a translation function interpreting a theory into another.
\end{abstract}

\keywords First-Order Logic \sep Algebraic Logic \sep Model Theory \sep Cylindric Algebras
\sep Simplification Rules \sep Translation Functions \sep Logical Interpretation \sep Nested Quantifiers

\section{Introduction}
\label{intro}

In general, it is not possible to bring out and discard nested quantifiers from formulas in first-order logic. In this paper, we will however present some cases in which this is possible. In order to do so, we introduce the notion of variable non-dependent\footnote{We use the term \textit{non-dependent} to avoid confusion with other usages of the term \textit{independent} in logic and with the term \textit{independent variable} which in mathematics is used for a symbol that represents an arbitrary value in the domain of a function, see, e.g., \citep[Section~1.1]{Stewart}.} formulas.

We are going to call a formula $\varphi$ \emph{non-dependent of
variable $x$} if the truth or falsity of formula $\varphi$ does not
depend on how variable $x$ is interpreted, \ie which value we assign
to $x$. To achieve non-dependence, we may need to put restrictions on
the scope of interpretation of $x$ and that of other variables.  So in
general, we say that $\varphi$ is \emph{non-dependent of variable $x$
in a model provided some condition }captured by another
formula $\theta$, for a precise definition, see
Definition~\ref{def:indep-prov} on p.\pageref{def:indep-prov}.

There are various ways in which a formula can be non-dependent of variable $x$:\footnote{While the examples here are from mathematics and assume that the variables are numbers, we do not make that assumption on the nature of the variables in our definitions and theorems below: ``$x$ is human'' is dependent of $x$; ``$k$ is an inertial observer according to observer $x$'' is non-dependent of $x$ (in classical and relativistic kinematics).}
\begin{itemize}
\item The formula does not contain $x$, \eg $1 \leq y \leq 2$ as illustrated\footnote{In Figure~\ref{fig-non-dependence} we present the main concepts and ideas in a naive and intuitive way, simplified to two numerical dimensions. In following figures, we will use our formal framework more rigorously and also allow infinitely many variables of any kind.} on the right in Figure~\ref{fig-non-dependence} is non-dependent of $x$ in every model for any language containing binary predicate $\le$. 
\item The formula contains $x$, but $x$ is bounded (\ie it does not occur free) in the formula, \eg $\exists x (y\neq x)$ is non-dependent of $x$ in every model.
\item The formula contains $x$, but is always true or always false, \eg $\exists y (y\neq x)$ is non-dependent of $x$ in every model (it is always true if the model has at least two elements and false otherwise).
\item The formula contains $x$ and is not always true or false, but is non-dependent of the value of $x$, \eg $(x^2 + 1)(y^2 - y) > 0$ is non-dependent of variable $x$ in the ordered field of real numbers.
\item The formula is non-dependent of $x$ provided some condition, \eg $x (y^2 - y) \geq 0$ is non-dependent of variable $x$ in the ordered field of real numbers provided $x$ is positive.
\end{itemize}

\begin{figure}
  \begin{center}
    \includegraphics{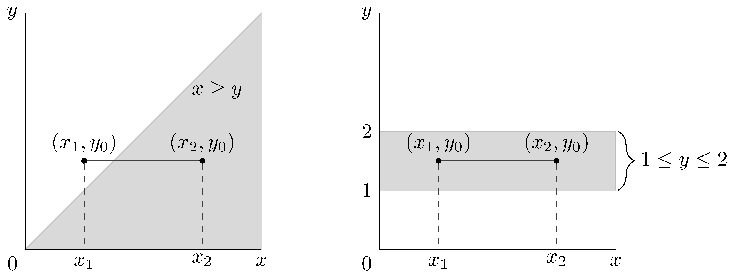}
  \caption{\label{fig-non-dependence}\footnotesize{Here the grey color represents values which make the formulas true and white represents values which make the formulas false. \\ On the left we have the formula $x \geq y$ of which the truth value is dependent of both the variables $x$ and $y$. Only changing the value of $x$ while keeping $y$ constant can change the truth value of this statement. \\ On the right we have the formula $1 \leq y \leq 2$ which is variable non-dependent of $x$. Whatever value we choose for $x$, the truth value of that statement does not change since it is only dependent of the value of $y$.}}
  \end{center}
\end{figure}

In general, mathematical theorems can be viewed as special cases of variable non-dependence. For example, by Fermat's little theorem,\footnote{See Pierre de Fermat's letter to Fr{\'e}nicle de Bessy, dated October 18, 1640 in \citep[pp.~206-212]{Fermat-corresp}.} formula $\exists x (x\cdot p=a^p-a)$ is non-dependent of variable $p$ (in the ring of integers) provided that $p$ is a prime number.

Mathematical translation functions, which accept a well-formed formula in one formal language and mechanographically transform it into a formula in another formal language, rarely produce esthetically pleasing results. This is partly due to the fact that they have to add conditions to the formula which take the constraints of the theories behind the languages between which is being translated into account. 

For example, in \citep{diss} and \citep{ClassRelKin},\footnote{Our
work is part of a broader tradition of using methods from mathematical
logic to compare scientific theories in general, and relativity
theories in particular. See, e.g., \citep{BigBook}, \citep{Ma10},
\citep{Szabo}, \citep{StannettNemeti}, \citep{Friend15},
\citep{Govindarajalulu}, \citep{Laurenz}, \citep{8156948},
\citep{weatherall}, \citep{Morita}, \citep{Luo_2016},
\citep{DBLP:journals/corr/abs-1912-07060}, \citep{Halvorson_2019},
\citep{Khaled19}, \citep{ANDREKA2021}, \citep{Formica2021},
\citep{10.1007/978-3-030-66501-2_48}, \citep{Weatherall2018WhyNC},
\citep{10.1215/00294527-2022-0029}, \citep{MADARASZ2022103153},
\citep{MEADOWS_2023}, \citep{Weatherall2024}, \citep{JPVBLev},
\citep{Enayat}, \citep{KHALED_SZEKELY_2024}, and \citep{aslan2024}.}
an axiom system for special relativity was interpreted\footnote{For a
discussion on the relation between translations, interpretations and
definitional equivalence, see e.g., \citep{HMT71}, \citep{pinter},
\citep{Vis06}, \citep{definability}, \citep{Glymour},
\citep{NonDisjointLanguages}, or \citep{MCELDOWNEY2020}.} into the
language of late classical kinematics by a translation function. The
translation function has to add the condition to each inertial
observer that they have to go slower than light, which results in
convoluted nested formulas if the original formula includes multiple
inertial observers. This condition was expressed in the ether frame of
refence. To simplify translations, since all observers representing
the ether frame are at rest relative to each other, we were allowed to
assume that all inertial observers chose the same ether-representing
observer when the formula was built up from relations whose meaning
was non-dependent of the choice of this ether-representing
observer. For example, that ``the speed of something is $v$ according
to the ether-representing observer'' is not dependent of the
ether-representing observer, but that ``the speed of the
ether-representing observer is $v$ according to some other observer''
is not.

Let us consider as an illustration the axiom \ax{AxSelf}, which states that every  inertial observer is stationary in its own coordinate system:\footnote{See, e.g., \citep[p.~160]{Andreka2006}. In this axiom, $\IOb$ is the set of inertial observers, $Q$ is the set of quantities (where $\langle \Q,+,\cdot,\le \rangle$ is an Euclidean Field), and $\W$ is the \emph{worldview relation} capturing coordinatization. The axiom intuitively says that all inertial observers measure their own postion relative to themselves at coordinates $(t,0,0,0)$ at any time $t$.}
\begin{equation*}
\bforall {k}  {IOb}
\bforall {t,x,y,z}{\Q}
\big[\W(k,k,t,x,y,z) \leftrightarrow x=y=z=0\big].
\end{equation*}
If we translate this axiom from special relativity to classical kinematics we get\footnote{See \citep{diss}: p.~12 for the definition of the speed of observer $k$ relative to the ether $\speed_{e}(k)$, p.~19 for the definition of the set of all ether observers $\Ether$,  p.~30 for the definition of the radarization function $Rad_{\bar v}$ (this is used to transform between classical and relativistic co-ordinates: it is in essence a Galilean transformation followed by a Lorentz transformation, its inverse $Rad^{-1}_{\bar v}$ is a Lorentz transformation followed by a Galilean transformation), pp.~33-35 for the definition of the translation function, p.~35 for a discussion on the translation of the speed of light $c$, and p.~78 for a discussion on the simplification of the translated axiom \ax{AxSelf}.}
\begin{multline*}
\bforall {k}  {IOb} 
\underline{\bforall {e} {\Ether}}\Big( \speed_{e}(k) < c \\\to
\Bforall {t,x,y,z}{\Q} \underline{\bforall {e} {\Ether}}  \big[ W\big(k,k,Rad^{-1}_{\bar{v}_k(e)}(k,k,t,x,y,z)\big) \leftrightarrow x=y=z=0 \big] \Big).
\end{multline*}

Note that $\bforall {e} {\Ether}$ occurs twice\footnote{The first is generated by the translation of $IOb$, the second is generated by the translation of $\W$.} in the translated formula. With the methods developed in \citep[\S~11~Appendix]{diss} and with the more generic method we present in the current paper\footnote{See Theorem~\ref{prop-simp} in Section~\ref{sec:Application} below.} we can simplify this to
\begin{multline*}
\bforall {k} { IOb}  \underline{\bforall {e} {\Ether}}  
\Big( \speed_{e}(k) < c \\\to
\Bforall {t,x,y,z}{\Q} \big[ W\big(k,k,Rad^{-1}_{\bar{v}_k(e)}(k,k,t,x,y,z)\big) \leftrightarrow x=y=z=0 \big] \Big)
\end{multline*}
because the statement does not depend on which ether observer $e$ is chosen. This simplified translation is a lot easier to prove\footnote{See \citep[p.~39]{diss} for a proof (using the simplification from this example) that \ax{AxSelf} translated from special relativity to classical kinematics is a theorem in classical kinematics, which is one of the steps in showing that the given translation is an interpretation.} than the original mechanographical translation containing redundant nested quantifiers.\\

This idea of variable non-dependence naturally appears in certain formalizations of Einstein's Special Principle of Relativity, see \citep[\S2.8.3]{Judit} and \citep{DiffFormRelPrinc}. 
Using their formal language, the formalizations there can be reformulated in terms of variable
non-dependence because their formulation intuitively says that the truth or falsity of a formal description $\varphi(k,\bar x)$ of a physical experiment is non-dependent of variable $k$ provided $k$ is an inertial observer.

\section{Formal framework}

Our framework is a fairly standard combination of model theory\footnote{See, \eg \citep{Ho93} or \citep{Ho97}.}, definability theory\footnote{See, \eg \citep{definability}.} and Tarskian algebraic logic\footnote{See \citep{HMT71}, \citep{HMTAN}, \citep{HMT85}, \citep{Monk2000}, and \citep{Andreka2022}.}, with some minor variations to the notation to suit our needs.

We use the following set of basic logical symbols for first-order predicate logic with equality $$\mathsf{Log}\de\{\,\exists, \land, \neg, (, ), =\,\}$$ and assume that there is a countable set $\Var$ of variables.

\begin{conv} We usually refer to arbitrary elements of $\Var$ by using indexes.  For the sake of simplicity, we
  fix a concrete ordering $\var_1,\var_2\dots,\var_i,\dots$ of the
  variables. When we would like to talk about $n$-many arbitrary
  variables from $\Var$, we use double indexes
  $i_1,\dots,i_n$. Sometimes the list of variables
  $\var_{i_1},\dots,\var_{i_n}$ is abbreviated to $\bar{v}$ and
  quantifiers $\forall\var_{i_1},\dots,\forall \var_{i_n}$ to $\forall
  \bar{v}$.  Sometimes, when the concrete value $i$ is not important, we use metavariables such as $x$, $y$, $z$ to
  denote $\vi$ for some $i$.
\end{conv}

A \emph{signature\footnote{A \textit{signature} is also called a \textit{vocabulary}.} of language $\LL$} is a pair $\langle \Pred_{\LL}, \ar_{\LL} \rangle$ of the set $\Pred_{\LL}$ of \emph{predicates}\footnote{Note that we allow $\Pred_\LL$ to be infinite.} (relation symbols) and the \emph{arity function} $\ar_{\LL}$ which assigns an arity\footnote{The \textit{arity} is the number of variables in the relation, it is also called the \textit{rank}, \textit{degree}, \textit{adicity} or \textit{valency} of the relation.} to elements of $\Pred_{\LL}$.
\emph{\textbf{Formulas} of language $\LL$} are built up recursively from alphabet $\Pred_{\LL}\cup \mathsf{Log} \cup \Var$ in the usual way and their set is denoted by $\Fm_{\LL}$.
A \emph{\textbf{model} $\mathfrak{M} = \langle M,\langle p^{\mathfrak{M}}: p\in \Pred_{\LL}\rangle \rangle$ of language $\LL$} consists of a non-empty \emph{underlying set} $M$, and for every predicate $p$ of $\LL$, a relation $p^{\mathfrak{M}} \subseteq M^n$ with the  arity $\ar_{\LL}(p) = n$.\footnote{The underlying set $M$ is also called the \textit{universe}, the \textit{carrier} or the \textit{domain} of model $\mathfrak{M}$. $M^n$ denotes the Cartesian power set of set $M$.}

By $\bar a ^i_b$ let us denote the sequence which is the same as $\bar a=(a_1,a_2,\dots,a_n,\dots)$ except at $i$ where it is $b$, \ie $\bar a^i_b=(a_1,\dots,a_{i-1},b,a_{i+1},\dots)$. When using a metavariable, say $x$ abbreviating $v_i$, we talk about the $x$-th component of $\bar a$ meaning the $i$-th component, and also use notation $\bar a^x_b$ instead of $\bar a^i_b$ in the same spirit.\footnote{See Figure~\ref{fig-xindep} below for an example on the usage of $\bar a ^i_b$.}

To recall the notion of \emph{semantics}, let $\mathfrak{M}$ be a model, let $M$ be the underlying set of $\mathfrak{M}$, let $\varphi$ be a formula and let $\bar a\in M^{\omega}$ be an infinite sequence of elements of $\mathfrak{M}$ then we inductively define that $\bar a$ \emph{\textbf{satisfies} $\varphi$ in} $\mathfrak{M}$, in symbols $\mathfrak{M}\models\varphi[\bar a],$ as:

\begin{enumerate}[(i)]
\item \label{semantics1} For predicate $p$, $\mathfrak{M}\models p(\var_{i_1}, \var_{i_2}, \dots, \var_{i_n})[\bar a]$ holds if $\big(a_{i_1}, a_{i_2}, \dots , a_{i_n}\big)\in p^\mathfrak{M},$ 
\item \label{semantics2} $\mathfrak{M}\models (\vi = \vj)[\bar a]$ holds if $a_i=a_j$ holds,
\item \label{semantics3} $\mathfrak{M}\models \neg\, \varphi[\bar a]$ holds if $\mathfrak{M}\models \varphi[\bar a]$ does not hold,
\item \label{semantics4} $\mathfrak{M}\models (\psi \land \theta)[\bar a]$ holds if both $\mathfrak{M}\models \psi[\bar a]$ and $\mathfrak{M}\models \theta[\bar a]$ hold,
\item \label{semantics5} $\mathfrak{M}\models \big(\exists \, \vj \, \psi\big)[\bar a]$ holds if there is an element $b \in M$, such that $\mathfrak{M}\models \psi\left[\bar a^j_b\right]$.
\end{enumerate}
$\mathfrak{M}\models\varphi[\bar{a}]$ can also be read as $\varphi[\bar{a}]$ \textit{being true in} $\mathfrak{M}$. That $\varphi$ is true in $\mathfrak{M}$ for all evaluations of variables is denoted by $\mathfrak{M}\models\varphi$. 

\begin{rem}\label{rem-abbr}
We use $\varphi \lor \psi$ as an abbreviation for $\neg\,(\neg\, \varphi \land \neg\, \psi)$, $\varphi \rightarrow \psi$ for $\neg\, \varphi \lor \psi$,  $\varphi \leftrightarrow \psi$ for $(\varphi \rightarrow \psi) \land (\psi \rightarrow \varphi)$, and $\forall \var_i \,\varphi$  for $\neg\, \exists\, \var_i \, \neg\, \varphi$. 
\end{rem}

Let $\MM$ be a model and $\varphi$ be a formula of its language. Then
the \emph{\textbf{meaning} of $\varphi$ in $\MM$} is defined as the set
of sequences from $\MM$ satisfying $\varphi$, \ie 
\begin{equation*}
  \mng{\varphi}{\MM}\defeq\{\bar{a}\in M^{\omega} : \MM\models \varphi[\bar{a}]\}.
\end{equation*}

  Let $x$ be a variable, let $\MM$ be a model, and let $\varphi$ and
  $\psi$ be a formulas of the language of $\MM$.  Then, by the
  definition of meaning, we have
\begin{equation*}
  \mng{\forall x\varphi}{\MM} \subseteq \mng{\varphi}{\MM} \subseteq
  \mng{\exists x \varphi}{\MM},
\end{equation*}
as illustrated in Figure~\ref{fig-meaning-cylindric}.

\begin{figure}
  \begin{center}
    \includegraphics{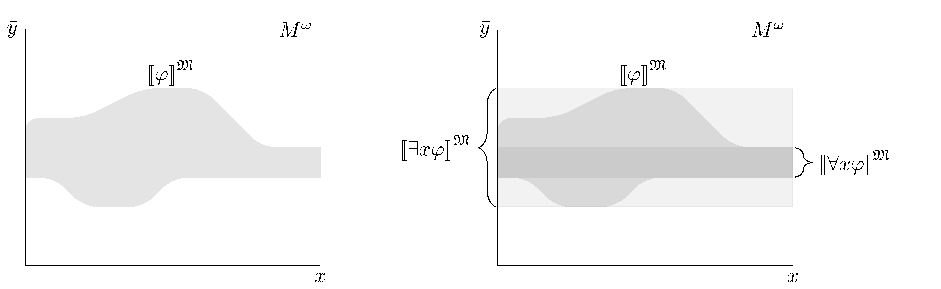}
  \caption{\label{fig-meaning-cylindric} \footnotesize{Let the medium grey $\mng{\varphi}{\MM}$ be the set of all values of $x$ and $\bar y$ in $M^{\omega}$ for which $\varphi$ is true. On the right, the meaning of ``exists'' (light grey rectangle, actually a cylinder with an infinite number of dimensions) and ``for all'' (dark grey rectangle) are added, illustrating that $\mng{\forall x\varphi}{\MM} \subseteq \mng{\varphi}{\MM} \subseteq \mng{\exists x \varphi}{\MM}$. Note that the axis $\bar y$ is represented as a vector because there are an infinite number of dimensions in $M^{\omega}$.}}
  \end{center}
\end{figure}

\begin{rem}\label{rem:mng-nice}
  There is a set theoretic operation corresponding to every logic
  operation behaving nicely with meanings:
  \begin{itemize}
    \item complement to negation $\mng{\lnot \varphi}{\MM}=
      M^\omega\setminus\mng{\varphi}{\MM}$, we will abbreviate this
      as $-\mng{\varphi}{\MM}$,
    \item intersection to conjunction $\mng{\varphi\land
      \psi}{\MM}=\mng{\varphi}{\MM}\cap\mng{\psi}{\MM}$,
  \item union to disjunction $\mng{\varphi\lor
    \psi}{\MM}=\mng{\varphi}{\MM}\cup\mng{\psi}{\MM}$,
  \item existential quantifiers to cylindrifications\footnote{For further discussion of the cylindrification $C_x$ see, e.g., \citep[p.~452, section 2]{Monk2000}.}
    \begin{equation*}
      \mng{\exists x \varphi}{\MM}=C_x\mng{\varphi}{\MM}=\left\{ \bar a \in M^\omega : \bar a^x_b\in\mng{\varphi}{\MM} \text{ for some }b\in M\right\},
    \end{equation*}
    see Figure~\ref{fig-meaning-cylindric}.
  \end{itemize}
\end{rem}

\section{Definitions and Theorems}
\label{sec:def}

Throughout this section, let $\MM$ be a model, $x$ and $y$ be variables, and let $\varphi$, $\psi$ and $\theta$ be formulas in the language of $\MM$. 

\begin{defn} We say that $\varphi$ is
\emph{\textbf{non-dependent of variable $x$} in model $\MM$} if{}f for
all sequences of elements $\bar a\in M^{\omega}$ and $b\in M$,
\begin{equation*}
  \MM\models\varphi[\bar a] \iff \MM\models\varphi[\bar a ^x_b].
\end{equation*}
\end{defn}

Let us note that we have the following equivalent\footnote{While we use single-line arrows $\leftrightarrow$ and $\rightarrow$ for equivalence and implication in the object language, we use double-line arrows $\iff$ and $\Longrightarrow$ in the meta-language.} formulations of variable non-dependence:
  \begin{equation*}\label{eq:indep}
    \varphi \text{ is non-dependent of $x$ in } \MM \iff \mng{\forall x
      \varphi}{\MM}=\mng{\varphi}{\MM} \iff
    \mng{\varphi}{\MM}=\mng{\exists x \varphi}{\MM},
  \end{equation*}
  and hence
   \begin{equation*}
         \varphi \text{ is non-dependent of $x$ in } \MM \iff \MM \models \exists x \varphi \leftrightarrow \forall x \varphi.
  \end{equation*}
This is a corollary of Proposition~\ref{prop:indep} below, and it can
be proven by choosing $\theta$ to be a tautology in that statement.

Let us note that if variable $x$ does not occur free in $\varphi$, then $\varphi$ is non-dependent of variable $x$ in every model. However, the converse does not hold: for example, the formula $x=x$ is non-dependent of variable $x$ in every model, but $x$ does occur free in it.

\begin{defn}\label{def:indep-prov} We say that $\varphi$ is
\emph{\textbf{non-dependent of variable $x$ in model $\MM$ provided
    $\theta$}} if{}f, for all sequences of elements $\bar a\in
M^{\omega}$ and $b\in M$,
\begin{equation}\label{eq:indep-prov}
  \MM\models\theta [\bar a]\ \text{ and }\  \MM\models\theta [\bar a^x_b]\ \implies\ (\ \MM\models\varphi[\bar a] \iff \MM\models\varphi[\bar a ^x_b]\ ), 
\end{equation}
see Figure~\ref{fig-xindep}.
\end{defn}

\begin{rem}\label{rem:2} It is straightforward to check the following observations from the definitions:
\begin{itemize}
  \item $\theta$ is non-dependent of $x$ in $\MM$ provided $\theta$,
  \item $\exists x \varphi$ is always non-dependent of $x$ in $\MM$,
  \item if $\varphi$ is non-dependent of $x$ in $\MM$ provided $\theta$, then so is $\exists y \varphi$,
  \item Boolean-closedness: if $\varphi$ and $\psi$ are non-dependent of $x$ in $\MM$ provided $\theta$, then so are $\neg \varphi$ and $\varphi \land \psi$,
  \item monotonicity: if $\varphi$ is non-dependent of $x$ in $\MM$ provided $\theta$ and $\hat{\theta} $ implies $\theta$ in $\MM$, then $\varphi$ is non-dependent of $x$ in $\MM$ provided $\hat{\theta}$.
  \end{itemize}
  \end{rem}
 \begin{rem}\label{rem:boolean} 
  By Boolean-closedness and monotonicity, we have the following: In any model, if $\varphi_1$ is non-dependent of $x$ provided  $\theta_1$ and $\varphi_2$ is non-dependent of $x$ provided $\theta_2$, then $\varphi_1*\varphi_2$ is non-dependent of $x$  provided $\theta_1\land\theta_2$ for any binary Boolean-definable logical connective $*$.
\end{rem}

\begin{figure}
  \begin{center}
    \includegraphics{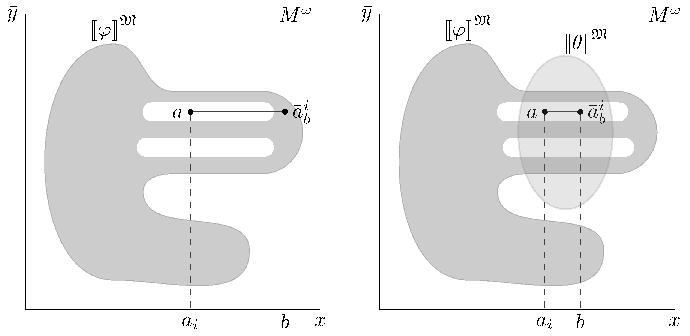}
    \caption{\label{fig-xindep} \footnotesize{On the left hand, we see a formula $\varphi$ which is not non-dependent of $x$: changing the $x$-value from $a_i$ into $b$ changes the truth value of $\varphi$. However, on the right we see that adding an extra condition $\theta$ does make $\varphi$ non-dependent of $x$ provided $\theta$: changing the $x$-value does never change the truth value of $\varphi$ as long as the evaluations of variables remain inside the area defined by $\mng{\theta}{\MM}$.}}
  \end{center}
\end{figure}

For arbitrary formulas $\phi$ and $\varphi$, we are using bounded quantifiers as follows:\footnote{We here use a notation for bounded quantifier where the bounds are other formulas viewed as parametricaly defined subsets of the model where we interpret them.  An advantage of this notation is that  it makes the ideas behind some formulas easier to grasp.  Similar notation can be found in, e.g., \citep{BigBook} and \citep{logst}; we already used this similar notation in the translation examples in the introduction, for example, $\bforall{k}{IOb}\varphi$ instead of $\forall k (IOb(k) \to \varphi)$.}
\begin{equation}\label{eq-bquant}
\bforall {u}{\phi}\varphi \defiff \forall u (\phi \to \varphi)\  \text{ and }\ 
\bexists {u}{\phi}\varphi \defiff \exists u (\phi \AND \varphi). 
\end{equation}

\begin{prop}\label{prop:indep}
  The following statements are equivalent:
  \begin{enumerate}[(i)]
  \item\label{i-pindep} $\varphi$ is non-dependent of $x$ in $\MM$ provided $\theta$,
  \item\label{i-epindep}
    $\mng{\theta\land\bexists{x}{\theta}\varphi)}{\MM}=\mng{\theta\land\varphi}{\MM}$,  
  \item\label{i-upindep0} $\mng{\theta\land\bforall{x}{\theta}
    \varphi)}{\MM}=\mng{\theta\land\varphi}{\MM}$,  
  \item\label{i-upindep} $\mng{\theta \to \bforall{x}{\theta}
    \varphi)}{\MM}= \mng{\theta\to \varphi}{\MM}$, and
  \item\label{i-epindep0} $\mng{\theta\to\bexists{x}{\theta}
      \varphi)}{\MM}=\mng{\theta\to\varphi}{\MM}$.
  \end{enumerate}
\end{prop}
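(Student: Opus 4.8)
The plan is to push everything down to the pointwise condition of Definition~\ref{def:indep-prov} (statement (\ref{i-pindep})) by rewriting the four set-equalities via the dictionary of Remark~\ref{rem:mng-nice}. Write $\Theta\defeq\mng{\theta}{\MM}$ and $\Phi\defeq\mng{\varphi}{\MM}$, and abbreviate $-\Theta\defeq M^\omega\setminus\Theta$. Unfolding the bounded quantifiers by \eqref{eq-bquant} and Remark~\ref{rem:mng-nice}, a sequence $\bar a$ lies in $\mng{\bexists{x}{\theta}\varphi}{\MM}=C_x(\Theta\cap\Phi)$ iff $\bar a^x_b\in\Theta\cap\Phi$ for some $b$, while (using $\forall x=\neg\exists x\neg$ from Remark~\ref{rem-abbr}) it lies in $\mng{\bforall{x}{\theta}\varphi}{\MM}$ iff $\bar a^x_b\in\Phi$ for every $b$ with $\bar a^x_b\in\Theta$. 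Two elementary substitution facts will do all the real work: first $\bar a^x_{a_x}=\bar a$, where $a_x$ is the $x$-th component of $\bar a$, so the instance $b=a_x$ is always available; and second $(\bar a^x_b)^x_c=\bar a^x_c$, so any cylinder — in particular $\mng{\exists x\psi}{\MM}$ and $\mng{\forall x\psi}{\MM}$ — takes the same value at $\bar a$ and at $\bar a^x_b$.

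Next I would dispose of the two implication forms by pure Boolean algebra, reducing them to the two conjunction forms. Since $\mng{\theta\to\alpha}{\MM}=-\Theta\cup\mng{\alpha}{\MM}$, the elementary identity
\[
-\Theta\cup A=-\Theta\cup B\iff\Theta\cap A=\Theta\cap B
\]
holds for arbitrary sets $A,B$. Applying it with $A=\mng{\bforall{x}{\theta}\varphi}{\MM}$ and $B=\Phi$ gives (\ref{i-upindep})$\iff$(\ref{i-upindep0}), and with $A=\mng{\bexists{x}{\theta}\varphi}{\MM}$ it gives (\ref{i-epindep0})$\iff$(\ref{i-epindep}). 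Hence it remains only to prove (\ref{i-pindep})$\iff$(\ref{i-epindep}) and (\ref{i-pindep})$\iff$(\ref{i-upindep0}), after which all five statements are equivalent.

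For (\ref{i-pindep})$\iff$(\ref{i-upindep0}), note that both sides of (\ref{i-upindep0}) are subsets of $\Theta$, so (\ref{i-upindep0}) says exactly that for every $\bar a\in\Theta$ one has $\bar a\in\Phi$ iff $\bar a^x_b\in\Phi$ for all $b$ with $\bar a^x_b\in\Theta$. Assuming (\ref{i-pindep}): if $\bar a\in\Phi$ then for each such $b$ non-dependence gives $\bar a^x_b\in\Phi$, while conversely instantiating the universal statement at $b=a_x$ (so that $\bar a^x_b=\bar a$) immediately yields $\bar a\in\Phi$; this gives (\ref{i-upindep0}). For the converse, given $\bar a,b$ with $\bar a,\bar a^x_b\in\Theta$, I would apply (\ref{i-upindep0}) at both $\bar a$ and $\bar a^x_b$: because $(\bar a^x_b)^x_c=\bar a^x_c$, the two universally quantified right-hand sides are literally the same condition, so $\bar a\in\Phi\iff\bar a^x_b\in\Phi$, which is (\ref{i-pindep}). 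The proof of (\ref{i-pindep})$\iff$(\ref{i-epindep}) is the exact dual, replacing ``for all $b$'' by ``for some $b$'' and the instance $b=a_x$ playing the role of a witness rather than a counterexample; the transfer between $\bar a$ and $\bar a^x_b$ again rests on $(\bar a^x_b)^x_c=\bar a^x_c$.

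I expect the main obstacle to be bookkeeping rather than mathematical depth: one must keep the guard $\theta$ and the body $\varphi$ carefully separated — in Definition~\ref{def:indep-prov} the hypothesis asserts $\theta$ at \emph{both} $\bar a$ and $\bar a^x_b$, and in the conjunction forms (\ref{i-epindep}),(\ref{i-upindep0}) the outer $\theta$ must first be used to restrict attention to $\bar a\in\Theta$ before the inner biconditional is read off. Beyond this, no genuine difficulty arises: everything is driven by the two substitution identities $\bar a^x_{a_x}=\bar a$ and $(\bar a^x_b)^x_c=\bar a^x_c$ together with the cylinder invariance they encode.
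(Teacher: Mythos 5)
Your proof is correct, and it takes a genuinely different route from the paper's. The paper establishes a cycle: it proves \eqref{i-pindep}$\iff$\eqref{i-epindep} pointwise (the backward direction by contraposition), then \eqref{i-epindep}$\implies$\eqref{i-upindep}$\implies$\eqref{i-upindep0}$\implies$\eqref{i-pindep} and \eqref{i-upindep0}$\implies$\eqref{i-epindep0}$\implies$\eqref{i-epindep}, where the steps into \eqref{i-upindep} and \eqref{i-epindep0} are equational computations on meanings (De Morgan, double negation, quantifier negation) that apply the conjunction items to $\neg\varphi$ --- hence they require Boolean closure of non-dependence (Remark~\ref{rem:2}) together with the previously established equivalences with \eqref{i-pindep}, an ordering constraint the paper explicitly flags in a figure. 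You sidestep all of that: your Boolean identity $-\Theta\cup A=-\Theta\cup B\iff\Theta\cap A=\Theta\cap B$ (valid for arbitrary sets $A,B$: intersect with $\Theta$ in one direction, union with $-\Theta$ in the other) collapses \eqref{i-upindep} to \eqref{i-upindep0} and \eqref{i-epindep0} to \eqref{i-epindep} in one stroke, with no appeal to $\neg\varphi$ or Remark~\ref{rem:2}; and your pointwise proofs of \eqref{i-pindep}$\iff$\eqref{i-epindep} and \eqref{i-pindep}$\iff$\eqref{i-upindep0} are complete --- in particular, the observation that evaluating \eqref{i-upindep0} (resp.\ \eqref{i-epindep}) at $\bar a$ and at $\bar a^x_b$ yields literally the same quantified right-hand side, via $(\bar a^x_b)^x_c=\bar a^x_c$, neatly replaces the paper's two contrapositive arguments, while $\bar a^x_{a_x}=\bar a$ supplies the instantiation (resp.\ witness) that makes the easy inclusions automatic, where the paper instead cites its general inclusion chain \eqref{eq-ingeneral}. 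What the paper's route buys is rehearsal of the meaning-algebra manipulations of Remark~\ref{rem:mng-nice} that recur in Propositions~\ref{prop:2}, \ref{prop:3} and \ref{prop-C}; what yours buys is economy and a flatter dependency structure --- one elementary set identity plus two substitution facts handle all five statements, with \eqref{i-pindep} as the hub.
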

\noindent
Let us note here that
\begin{equation}\label{eq-ingeneral}
  \mng{\theta\land \exists{x}(\theta\land \varphi)}{\MM} \supseteq
  \mng{\theta\land\varphi}{\MM} \subseteq \mng{\varphi}{\MM} \subseteq
  \mng{\theta\to\varphi}{\MM} \supseteq
  \mng{\theta\to\forall{x}(\theta\to\varphi)}{\MM}
\end{equation}
holds in general.

\begin{figure}[h!tb]
  \begin{center}
    \begin{tikzpicture}[scale=1.05]
 
      \tikzstyle{sago}=[->,double, thick]
      \tikzstyle{nodo}=[inner sep=6]
      
      \node[nodo] (i) at (4,0) {(i)};
      \node[nodo] (ii) at (1,0) {(ii)};
      \node[nodo] (iii) at (7,0) {(iii)};
      \node[nodo] (iv) at (4,2) {(iv)};
      \node[nodo] (v) at (4,-2) {(v)};

      \draw[sago,<->]  (i) to (ii) ;
      \draw[sago] (iii) to (i) ;
      \draw[sago] (ii) -- (iv) ;
      \draw[sago] (iv) to (iii); 
      \draw[sago] (iii) -- (v) ;
      \draw[sago] (v) to (ii);
    \end{tikzpicture}
  \end{center}
  \caption{This figure illustrates the order of proving the equivalences between the items of Proposition~\ref{prop:indep}. We need to prove ``(iii)$\implies$(i)'' directly because in the proof of ``(iii)$\implies$(v)'' we use the equivalence of (iii) and (i).}
\end{figure}
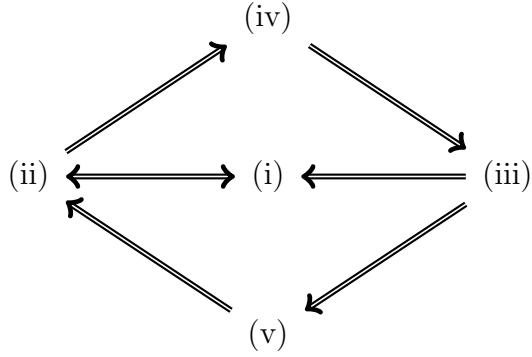

\begin{proof}
  Proof of ``\eqref{i-pindep}$\implies$\eqref{i-epindep}'': By
  \eqref{eq-ingeneral} and \eqref{eq-bquant}, it is enough to show that $\mng{\theta\land
    \exists{x}(\theta\land \varphi)}{\MM} \subseteq
  \mng{\theta\land\varphi}{\MM}$. To do so, let $\bar a\in
  \mng{\theta\land \exists{x}(\theta\land \varphi)}{\MM}$. Then $\bar a\in
  \mng{\theta}{\MM}$ and $\bar a\in \mng{\exists{x}(\theta\land
    \varphi)}{\MM}$, the latter means that there is $b\in M$ such that
  $\bar a^x_b\in \mng{\theta}{\MM}\cap \mng{\varphi}{\MM}$.  From
  this, by Definition~\ref{def:indep-prov}, follows that $\bar a\in
  \mng{\varphi}{\MM}$. Consequently, $\bar a\in
  \mng{\theta\land\varphi}{\MM}$, and this is what we wanted to show.

  Proof of ``\eqref{i-epindep}$\implies$\eqref{i-pindep}'': We are going to prove the contraposition of the statement. So assume that  \eqref{i-pindep} does not hold, \ie there is $\bar a \in M^\omega$  and $b\in M$ such that $\MM\models \theta[\bar a]$, $\MM\models \theta[\bar a ^x_b]$, $\MM\models \varphi[\bar a^x_b]$, but $\MM\not\models \varphi[\bar a]$. Then $\bar a \not\in \mng{\theta\land\varphi}{\MM}$, but $\bar a\in\mng{\exists x (\theta\land\varphi)}{\MM}$. Hence, by $\MM\models \theta[\bar a]$, we get $\bar a\in\mng{\theta\land\exists x(\theta\land\varphi)}{\MM}$. Thus $\mng{\theta\land\exists x(\theta\land\varphi)}{\MM}\neq  \mng{\theta\land\varphi}{\MM}$. Proving that if \eqref{i-pindep} does not hold, then \eqref{i-epindep} also does not hold. Consequently, \eqref{i-epindep} implies \eqref{i-pindep} as stated.
  
  Proof of ``\eqref{i-epindep}$\implies$\eqref{i-upindep}'' after using equation \eqref{eq-bquant} to unpack the bounded quantifier:
  \begin{align*}
    &\mng{\theta \to \forall{x}(\theta \to \varphi)}{\MM} \\
    &=\mng{\lnot\theta \lor \neg \exists{x}\lnot(\lnot\theta \lor \varphi)}{\MM} && \text{by the definitions of $\forall$ and $\to$.}\\
    &=\mng{\lnot\big(\theta\land \exists{x}(\theta \land \lnot\varphi)\big)}{\MM} && \text{by De Morgan twice and double negation.\footnotemark}\\
    &=-\mng{\theta \land \exists{x}(\theta\land \lnot\varphi)}{\MM} && \text{by Remark~\ref{rem:mng-nice}.}\\
    &=-\mng{\theta \land\lnot\varphi}{\MM} &&\text{by item \eqref{i-epindep} on $\neg\varphi$, \eqref{i-epindep}$\Leftrightarrow$\eqref{i-pindep} and Remark~\ref{rem:2}.}\\
  &=\mng{\neg\theta \lor \neg\neg\varphi}{\MM} &&\text{by Remark~\ref{rem:mng-nice} and De Morgan.}\\
      &=\mng{\theta \to\varphi}{\MM} &&\text{by double negation and definition of $\to$.}
  \end{align*}\footnotetext{See, e.g., \citep[p.~34]{hinman} for tautologies in propositional logic that we use, including De Morgan's laws, double negation, excluded middle, associativity, distributivity, exportation, and idempotency.}

  Proof of ``\eqref{i-upindep}$\implies$\eqref{i-upindep0}'' after unpacking the bounded quantifier:
  \begin{align*}
    & \mng{\theta\land\forall{x}(\theta\to \varphi)}{\MM} \\
    & =\mng{\theta\land(\theta\to\forall{x}(\theta\to   \varphi))}{\MM} &&\text{by identity } A\land B \equiv A \land (A \to B).\\
    & =\mng{\theta\land(\theta\to \varphi)}{\MM} &&\text{by item \eqref{i-upindep}}.\\
    & =\mng{\theta\land \varphi}{\MM} &&\text{by identity }A \land (A \to B)\equiv A\land B.
  \end{align*}

  Proof of ``\eqref{i-upindep0}$\implies$\eqref{i-pindep}'': we prove the contraposition of the statement. To do so, assume that  \eqref{i-pindep} does not hold, \ie there is $\bar a \in M^\omega$  and $b\in M$ such that $\MM\models \theta[\bar a]$, $\MM\models \theta[\bar a ^x_b]$, $\MM\not\models \varphi[\bar a^x_b]$, but $\MM\models \varphi[\bar a]$.
    Then $\bar a \in \mng{\theta\land\varphi}{\MM}$, but $\bar a\not\in\mng{\forall x (\theta\land\varphi)}{\MM}$, and hence $\bar a\not\in\mng{\theta\land\forall x(\theta\land\varphi)}{\MM}$. Thus $\mng{\theta\land\forall x(\theta\land\varphi)}{\MM}\neq  \mng{\theta\land\varphi}{\MM}$, and this is what we wanted to show.
  
Proof of ``\eqref{i-upindep0}$\implies$\eqref{i-epindep0}'' after unpacking the bounded quantifier:
  \begin{align*}
    & \mng{\theta\to\exists{x}(\theta\land \varphi)}{\MM}\\
    &=\mng{\lnot\theta \lor \exists{x}(\theta \land \varphi)}{\MM} && \text{by the definition of $\to$.}\\
    &=\mng{\lnot\big(\theta\land \neg\exists{x}(\theta \land \varphi)\big)}{\MM} && \text{by De Morgan and double negation.}\\
    &=-\mng{\theta \land \forall{x}\lnot(\theta\land \varphi)}{\MM} && \text{by Remark~\ref{rem:mng-nice} and quantifier negation law.\footnotemark}\\
    &=-\mng{\theta \land \forall{x}(\theta\to \neg \varphi)}{\MM} && \text{by De Morgan and the definition of $\to$.}\\
    & =-\mng{\theta \land \neg \varphi}{\MM} &&\text{by item \eqref{i-upindep0} on $\neg\varphi$, \eqref{i-upindep0}$\Leftrightarrow$\eqref{i-pindep} and Remark~\ref{rem:2}.}\\
    & =\mng{\neg\theta \lor \neg\neg\varphi}{\MM} &&\text{by Remark~\ref{rem:mng-nice} and De Morgan.}\\
    & =\mng{\theta\to\varphi}{\MM} &&\text{by double negation and definition of $\to$.} 
\end{align*}\footnotetext{See, e.g., \citep[p.~99]{hinman} for equivalences in first-order logic that we use, including negation and distributivity of quantifiers.}

Proof of ``\eqref{i-epindep0}$\implies$\eqref{i-epindep}'' after unpacking the bounded quantifier:
  \begin{align*}
    & \mng{\theta\land\exists{x}(\theta\land \varphi)}{\MM}\\
    & =\mng{\theta\land(\theta\to\exists{x}(\theta\land \varphi))}{\MM} &&\text{by identity } A\land B \equiv A \land (A \to B).\\
    & =\mng{\theta\land(\theta\to \varphi)}{\MM} &&\text{by item \eqref{i-epindep0}}.\\
    & =\mng{\theta\land \varphi}{\MM} &&\text{by identity }A \land (A \to B)\equiv A\land B.
  \end{align*}
\end{proof}

\begin{prop}\label{prop:2}
  If $\varphi$ is non-dependent of $x$ in $\MM$ provided
  $\theta$, then $\mng{\bexists{x}{\theta}\neg\varphi)}{\MM}$ is the
  complement of $\mng{\bexists{x}{\theta}\varphi)}{\MM}$ relative to $
  \mng{\exists x \theta}{\MM}$, \ie
    \begin{equation*}
      \mng{\bexists {x}{\theta} \neg\varphi)}{\MM} = \mng{\exists x
        \theta}{\MM} - \mng{\bexists x{\theta}\varphi)}{\MM}
    \end{equation*}
    in other words
    \begin{equation*}
      \mng{\bexists {x}{\theta} \neg\varphi)}{\MM} = 
    \mng{\exists x \theta \land \neg \bexists{x}{\theta} 
      \varphi)}{\MM}.
  \end{equation*}
\end{prop}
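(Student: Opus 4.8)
The plan is to translate everything into set operations on meanings and then reduce the claim to two facts about the cylindrification $C_x$ of Remark~\ref{rem:mng-nice}: that $C_x$ distributes over unions, and that non-dependence forces a disjointness. First I would unfold the bounded quantifiers using \eqref{eq-bquant}. Writing $T\defeq\mng{\theta}{\MM}$ and $F\defeq\mng{\varphi}{\MM}$ for brevity, and $-F$ for $M^\omega\setminus F$ as in Remark~\ref{rem:mng-nice}, the goal becomes $\mng{\exists x(\theta\land\neg\varphi)}{\MM}=\mng{\exists x\theta}{\MM}-\mng{\exists x(\theta\land\varphi)}{\MM}$, which Remark~\ref{rem:mng-nice} rewrites as the purely set-theoretic identity $C_x(T\cap -F)=C_xT - C_x(T\cap F)$.

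Next I would record the one inclusion that holds unconditionally. Since $T=(T\cap F)\cup(T\cap -F)$ and $C_x$ distributes over unions (this is the meaning-level content of $\exists x(\psi\lor\chi)\leftrightarrow\exists x\psi\lor\exists x\chi$ combined with Remark~\ref{rem:mng-nice}), we obtain $C_xT=C_x(T\cap F)\cup C_x(T\cap -F)$. Subtracting $C_x(T\cap F)$ from both sides immediately gives $C_xT-C_x(T\cap F)\subseteq C_x(T\cap -F)$, while the reverse containment $C_x(T\cap -F)\subseteq C_xT$ is trivial because $T\cap -F\subseteq T$. Hence the desired equality is equivalent to the single remaining statement that $C_x(T\cap F)$ and $C_x(T\cap -F)$ are disjoint.

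The crux, and the only place where non-dependence is used, is this disjointness, which I would prove by contradiction. If some $\bar a$ lay in both cylinders, there would be $b_1,b_2\in M$ with $\bar a^x_{b_1}\in T\cap F$ and $\bar a^x_{b_2}\in T\cap -F$; the two sequences $\bar a^x_{b_1}$ and $\bar a^x_{b_2}$ agree off $x$, both satisfy $\theta$, yet disagree about $\varphi$. Applying Definition~\ref{def:indep-prov} with base sequence $\bar a^x_{b_1}$ and variant value $b_2$ — using $(\bar a^x_{b_1})^x_{b_2}=\bar a^x_{b_2}$ — yields $\MM\models\varphi[\bar a^x_{b_1}]\iff\MM\models\varphi[\bar a^x_{b_2}]$, contradicting $\bar a^x_{b_1}\in F$ and $\bar a^x_{b_2}\notin F$. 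The main obstacle is precisely this bookkeeping: since \eqref{eq:indep-prov} is phrased for a sequence together with one $x$-variant, one must take $\bar a^x_{b_1}$ (not $\bar a$ itself) as the base point so that the two witnesses $b_1,b_2$ become a legitimate instance of the definition. Once disjointness is established, combining it with the covering $C_xT=C_x(T\cap F)\cup C_x(T\cap -F)$ from the previous step gives $C_xT-C_x(T\cap F)=C_x(T\cap -F)$, which is the claim; the final ``in other words'' form follows at once from $A-B=A\cap -B$.
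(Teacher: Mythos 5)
Your proof is correct and takes essentially the same route as the paper's: both arguments rest on the disjointness of $\mng{\exists x(\theta\land\varphi)}{\MM}$ and $\mng{\exists x(\theta\land\neg\varphi)}{\MM}$, proved by the same contradiction from Definition~\ref{def:indep-prov}, combined with the fact that their union is $\mng{\exists x\theta}{\MM}$. The only differences are cosmetic: you carry out the covering step set-theoretically via the cylindrification $C_x$ where the paper uses the corresponding logical equivalences (distributivity of $\exists$ over $\lor$, distributivity of $\land$ over $\lor$, and excluded middle), and you make explicit the base-point bookkeeping $(\bar a^x_{b_1})^x_{b_2}=\bar a^x_{b_2}$ that the paper's disjointness argument leaves implicit.
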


\begin{proof}
  If $\varphi$ is non-dependent of $x$ in $\MM$ provided $\theta$, then
  \begin{equation*}
    \mng{\bexists{x}{\theta} \varphi)}{\MM} \cap \mng{\bexists{
        x}{\theta} \neg \varphi)}{\MM} = \varnothing.
  \end{equation*}
  This is so because, if there was an $\bar{a}\in\mng{\exists x(\theta
    \land \varphi)}{\MM} \cap \mng{\exists x(\theta \land \neg
    \varphi)}{\MM}$, then there would also be $b$ and $c$ such that
  $\bar{a}^x_b$ and $\bar{a}^x_c$ satisfy $\theta$, and $\MM\models
  \varphi[\bar{a}^x_b]$, but $\MM\models
  \neg\varphi[\bar{a}^x_c]$. This would contradict the assumption that
  $\varphi$ is non-dependent of $x$ in $\MM$ provided $\theta$. Hence
  the intersection of $\mng{\exists x(\theta \land \varphi)}{\MM}$ and
  $\mng{\exists x(\theta \land \neg \varphi)}{\MM}$ has to be
  empty.

  To complete the proof, now we are going to show their
    union is $\mng{\exists x \theta}{\MM}$.
\begin{align*}
  &\mng{\exists x(\theta \land \varphi)}{\MM} \cup \mng{\exists x(\theta \land \neg \varphi)}{\MM} \\
  & = \mng{\exists x(\theta \land \varphi) \lor \exists x(\theta \land \neg \varphi)}{\MM} &&\text{by Remark~\ref{rem:mng-nice}.}\\
  & = \mng{\exists x\big((\theta \land \varphi) \lor (\theta \land \neg \varphi)\big)}{\MM} && \text{by the distributivity of $\exists$ over $\lor$.}\\
& = \mng{\exists x \big(\theta \land (\varphi \lor \neg \varphi)\big)}{\MM} &&\text{by the distributivity of $\land$ over $\lor$.}\\
& = \mng{\exists x \theta}{\MM} &&\text{by excluded middle.}
\end{align*}
\end{proof}

\begin{rem}\label{rem-hinman} 
  Let us recall the following facts from the literature:\footnote{See, \eg \cite[p.99]{hinman}.} If variable $x$ does not occur free in $\phi$, then we have the following logical equivalences:
  \begin{enumerate}[(i)]
    \centering
   \item \label{hm-e} $\exists x (\phi \land \psi )\equiv \phi \land \exists x\, \psi$,
   \item  \label{hm-a} $\forall x (\phi \lor \psi)  \equiv \phi \lor \forall x\, \psi$,
   \item \label{hm-et} $\exists x (\phi\to \psi)\equiv \phi \to \exists x \psi$,
   \item  \label{hm-at} $\forall x (\phi\to \psi)\equiv \phi \to \forall x \psi$,
   \item \label{hm-ta} $\forall x (\psi \to \phi) \equiv \exists x\psi \to \phi$.
  \end{enumerate}
\end{rem}

\begin{prop}\label{prop:3} 
If $\varphi$ is non-dependent of $x$ in $\MM$ provided $\theta$, then 
\begin{equation*}
  \mng{\bforall {x}{\theta} \neg \varphi)}{\MM} = \mng{\exists x \theta \rightarrow \neg \bforall{x}{\theta} \varphi) }{\MM}.
\end{equation*}

\end{prop}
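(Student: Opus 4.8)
The plan is to reduce the claim to Proposition~\ref{prop:2}, using the quantifier negation laws and the Boolean manipulation of meanings from Remark~\ref{rem:mng-nice}, in the same calculational style as the proofs above. First I would unpack both bounded quantifiers via \eqref{eq-bquant} and rewrite the left-hand side as a complement. Since $\bforall{x}{\theta}\neg\varphi$ abbreviates $\forall x(\theta\to\neg\varphi)$, applying the definition of $\forall$, the definition of $\to$, De Morgan and double negation gives
\[
\mng{\bforall{x}{\theta}\neg\varphi}{\MM} = -\mng{\exists x(\theta\land\varphi)}{\MM} = -\mng{\bexists{x}{\theta}\varphi}{\MM}.
\]
By the same computation applied to $\varphi$ in place of $\neg\varphi$ I would record the dual identity $\mng{\neg\bforall{x}{\theta}\varphi}{\MM} = \mng{\bexists{x}{\theta}\neg\varphi}{\MM}$, which is needed later to recognise the right-hand side.

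The key step is then to invoke Proposition~\ref{prop:2}. Because $\varphi$ is non-dependent of $x$ in $\MM$ provided $\theta$, so is $\neg\varphi$ by the Boolean-closedness recorded in Remark~\ref{rem:2}; applying Proposition~\ref{prop:2} to $\neg\varphi$ and using $\neg\neg\varphi\equiv\varphi$ yields
\[
\mng{\bexists{x}{\theta}\varphi}{\MM} = \mng{\exists x\theta}{\MM} - \mng{\bexists{x}{\theta}\neg\varphi}{\MM}.
\]
Substituting this into the expression for the left-hand side and applying De Morgan for sets turns the complement of a relative complement into a union:
\[
-\mng{\bexists{x}{\theta}\varphi}{\MM} = -\mng{\exists x\theta}{\MM}\ \cup\ \mng{\bexists{x}{\theta}\neg\varphi}{\MM}.
\]

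Finally I would translate this union back into object-language syntax. Using the dual identity from the first paragraph, $\mng{\bexists{x}{\theta}\neg\varphi}{\MM} = \mng{\neg\bforall{x}{\theta}\varphi}{\MM}$, together with the complement-to-negation and union-to-disjunction clauses of Remark~\ref{rem:mng-nice}, the right-hand side becomes
\[
\mng{\neg\exists x\theta \lor \neg\bforall{x}{\theta}\varphi}{\MM} = \mng{\exists x\theta \to \neg\bforall{x}{\theta}\varphi}{\MM},
\]
which is exactly the claimed meaning. I do not expect any genuine obstacle: once Proposition~\ref{prop:2} is in hand the argument is pure bookkeeping with the meaning operations of Remark~\ref{rem:mng-nice}. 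The one point that needs care is applying Proposition~\ref{prop:2} to $\neg\varphi$ rather than to $\varphi$ (legitimised by Boolean-closedness), since it is the relative complement of $\mng{\bexists{x}{\theta}\neg\varphi}{\MM}$, not of $\mng{\bexists{x}{\theta}\varphi}{\MM}$, that arises after the first rewriting.
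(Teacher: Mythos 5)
Your proof is correct, but it takes a genuinely different route from the paper. The paper proves Proposition~\ref{prop:3} directly by a calculational chain: after rewriting the left-hand side as $-\mng{\exists x(\theta\land\varphi)}{\MM}$, it substitutes $\theta\land\varphi$ by $\theta\land\forall x(\theta\to\varphi)$ \emph{under the existential quantifier} using item~\eqref{i-upindep0} of Proposition~\ref{prop:indep}, then pulls $\lnot\exists x(\theta\land\lnot\varphi)$ out of the scope of $\exists x$ via \eqref{hm-e} of Remark~\ref{rem-hinman} (legitimate since $x$ is not free in that subformula), and finishes with Boolean bookkeeping; Proposition~\ref{prop:2} is never used, and in the paper Propositions~\ref{prop:2} and \ref{prop:3} are independent inputs to the Corollary that follows them. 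You instead derive Proposition~\ref{prop:3} as a set-theoretic dual of Proposition~\ref{prop:2}: the non-dependence hypothesis enters only through Proposition~\ref{prop:2} applied to $\neg\varphi$ (justified by Boolean-closedness, Remark~\ref{rem:2}), giving $\mng{\bexists{x}{\theta}\varphi)}{\MM}=\mng{\exists x\theta}{\MM}-\mng{\bexists{x}{\theta}\neg\varphi)}{\MM}$, after which everything is complements and unions at the top level. Your route is shorter and conceptually cleaner in two respects: it makes explicit that Proposition~\ref{prop:3} is the universal-quantifier mirror of Proposition~\ref{prop:2}, and it avoids rewriting inside a quantifier, which in the paper's proof tacitly relies on the fact that cylindrification $C_x$ is a function of the meaning set, so equal meanings remain equal after applying $\exists x$. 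What the paper's proof buys in exchange is independence from Proposition~\ref{prop:2} (its only non-dependence input is Proposition~\ref{prop:indep}) and stylistic uniformity with the surrounding equational proofs. One small remark: you do not strictly need Boolean-closedness here; the direct instance of Proposition~\ref{prop:2} gives $\mng{\bexists{x}{\theta}\neg\varphi)}{\MM}=\mng{\exists x\theta}{\MM}\cap-\mng{\bexists{x}{\theta}\varphi)}{\MM}$, and combining this with the trivial inclusion $\mng{\exists x(\theta\land\varphi)}{\MM}\subseteq\mng{\exists x\theta}{\MM}$ yields the same union decomposition of $-\mng{\bexists{x}{\theta}\varphi)}{\MM}$; but your application to $\neg\varphi$ is perfectly legitimate and arguably tidier.
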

\begin{proof}
If $\varphi$ is non-dependent of $x$ in $\MM$ provided $\theta$, then after unpacking the bounded quantifier 
\begin{align*}
& \mng{\forall x(\theta \rightarrow \neg \varphi)}{\MM} \\
  & =\mng{\neg \exists x\neg(\neg\theta \lor \lnot\varphi)}{\MM} &&\text{by the definitions of $\forall$ and $\to$.}\\
& =\mng{\neg \exists x(\theta \land \varphi)}{\MM} &&
  \text{by De Morgan, and double negation.}\\
  & =-\mng{\exists x(\theta \land \varphi)}{\MM} &&\text{by Remark~\ref{rem:mng-nice}.}\\
& =-\mng{\exists x\big(\theta\land\forall{x}(\theta\to
    \varphi)\big)}{\MM} &&  \text{by  \eqref{i-upindep0} of Proposition~\ref{prop:indep}.}\\
  & =-\mng{\exists x\big(\theta\land\lnot\exists{x}\lnot(\lnot\theta\lor \varphi)\big)}{\MM} &&  \text{by the definitions of $\forall$ and $\to$.}\\
    & =-\mng{\exists x\big(\theta\land\lnot\exists{x}(\theta\land \lnot\varphi)\big)}{\MM} &&  \text{by De Morgan and double negation.}\\
  & =-\mng{\exists x\theta\land\lnot\exists{x}(\theta\land \lnot
    \varphi)}{\MM} &&  \text{by \eqref{hm-e} of Remark~\ref{rem-hinman}.}\\
  & =\mng{\lnot\big(\exists x\theta\land\lnot\exists x(\theta \land \neg \varphi)}{\MM} &&\text{by Remark~\ref{rem:mng-nice}.}\\
  & =\mng{\lnot\exists x\theta\lor\exists x(\theta \land \neg \varphi)}{\MM}  &&\text{by De Morgan and double negation.}\\
  & =\mng{\lnot\exists x\theta\lor\lnot\forall x\lnot(\theta \land \neg \varphi)}{\MM}  &&\text{by double negation and definition of $\forall$.}\\
  & =\mng{\lnot\exists x\theta\lor\lnot\forall x (\lnot\theta \lor \varphi)}{\MM}  &&\text{by De Morgan and double negation.}\\
& =\mng{\exists x\theta \rightarrow \neg \forall x (\theta \rightarrow \varphi)}{\MM} &&\text{by definition of $\to$.}
\end{align*}
\end{proof}

From Propositions~\ref{prop:2} and \ref{prop:3}, we get the following:
\begin{cor}
  If $\varphi$ is non-dependent of $x$ in $\MM$ provided $\theta$ and $\MM\models\exists x \theta$, \ie $\mng{\exists x\theta}{\MM}=M^\omega$, then
  \begin{align*}
    \mng{\neg\bexists{x}{\theta}\varphi}{\MM}&=\mng{\bexists{x}{\theta}\neg\varphi)}{\MM}, \text{ and}\\
    \mng{\neg\bforall{x}{\theta}\varphi}{\MM}&=\mng{\bforall{x}{\theta}\neg\varphi)}{\MM}.
  \end{align*}
\end{cor}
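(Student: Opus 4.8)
The plan is to obtain both identities as direct specializations of Propositions~\ref{prop:2} and~\ref{prop:3}, using the extra hypothesis $\mng{\exists x\theta}{\MM}=M^\omega$ to turn the relative constructions appearing there into absolute ones. Since all the substantive work has already been done in those two propositions, the argument amounts to a short unfolding together with one application each of the complement-to-negation and implication readings from Remark~\ref{rem:mng-nice}.

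For the first identity I would start from the ``in other words'' form of Proposition~\ref{prop:2}, namely $\mng{\bexists{x}{\theta}\neg\varphi)}{\MM}=\mng{\exists x\theta}{\MM}-\mng{\bexists{x}{\theta}\varphi)}{\MM}$, and substitute $\mng{\exists x\theta}{\MM}=M^\omega$. The relative complement then becomes the absolute complement $M^\omega-\mng{\bexists{x}{\theta}\varphi)}{\MM}=-\mng{\bexists{x}{\theta}\varphi)}{\MM}$, which by the complement-to-negation correspondence of Remark~\ref{rem:mng-nice} equals $\mng{\neg\bexists{x}{\theta}\varphi}{\MM}$. Reading the resulting chain of equalities backwards gives exactly $\mng{\neg\bexists{x}{\theta}\varphi}{\MM}=\mng{\bexists{x}{\theta}\neg\varphi)}{\MM}$, as claimed.

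For the second identity I would start from Proposition~\ref{prop:3}, $\mng{\bforall{x}{\theta}\neg\varphi)}{\MM}=\mng{\exists x\theta\rightarrow\neg\bforall{x}{\theta}\varphi)}{\MM}$. Because $\mng{\exists x\theta}{\MM}=M^\omega$, every sequence satisfies the antecedent $\exists x\theta$, so by the meaning of $\to$ an implication with an identically true antecedent has the same meaning as its consequent; hence $\mng{\exists x\theta\rightarrow\neg\bforall{x}{\theta}\varphi)}{\MM}=\mng{\neg\bforall{x}{\theta}\varphi}{\MM}$. Combining this with the previous display yields the second claimed equality $\mng{\neg\bforall{x}{\theta}\varphi}{\MM}=\mng{\bforall{x}{\theta}\neg\varphi)}{\MM}$.

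There is essentially no hard part here, since the content is carried by Propositions~\ref{prop:2} and~\ref{prop:3}; the corollary merely records what those results say once the side condition $\theta$ is satisfiable for every evaluation of the remaining variables. The only step demanding any care is pinpointing where the hypothesis $\mng{\exists x\theta}{\MM}=M^\omega$ is actually used, namely in collapsing a relative complement to an absolute complement for the first identity and in discarding an always-satisfied antecedent for the second.
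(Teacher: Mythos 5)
Your proposal is correct and matches the paper's intent exactly: the paper gives no explicit proof, stating only that the corollary follows ``From Propositions~2 and~3,'' and your derivation — collapsing the relative complement in Proposition~2 to an absolute complement, and discarding the identically true antecedent $\exists x\,\theta$ in Proposition~3 — is precisely the intended unfolding. Both uses of the hypothesis $\mng{\exists x\theta}{\MM}=M^\omega$ are correctly identified and the applications of Remark~2 are sound.
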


  \begin{prop}\label{prop-h}
    If $\varphi$ is non-dependent of $x$ in $\MM$ provided $\theta$ and $\MM\models\exists x \theta$, then
    \begin{equation*}
       \mng{\bexists{x}{\theta}\varphi}{\MM}=\mng{\bforall{x}{\theta}\varphi)}{\MM}.
    \end{equation*}
  \end{prop}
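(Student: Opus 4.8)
The plan is to unpack the bounded quantifiers via \eqref{eq-bquant}, reducing the claim to the set equality $\mng{\exists x(\theta\land\varphi)}{\MM}=\mng{\forall x(\theta\to\varphi)}{\MM}$, and then to prove this by double inclusion, arguing pointwise on sequences $\bar a\in M^\omega$. It is worth separating the two inclusions because they use different hypotheses: the inclusion from $\forall$ to $\exists$ is where $\MM\models\exists x\theta$ is needed, while the inclusion from $\exists$ to $\forall$ is where variable non-dependence does the work.

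For the inclusion $\mng{\forall x(\theta\to\varphi)}{\MM}\subseteq\mng{\exists x(\theta\land\varphi)}{\MM}$, I would take $\bar a$ with $\MM\models\forall x(\theta\to\varphi)[\bar a]$. Since $\MM\models\exists x\theta$, there is some $b\in M$ with $\MM\models\theta[\bar a^x_b]$; the universal hypothesis applied at this $b$ yields $\MM\models\varphi[\bar a^x_b]$, so $\bar a^x_b$ witnesses $\theta\land\varphi$ and hence $\MM\models\exists x(\theta\land\varphi)[\bar a]$. This direction is just the general fact that over a nonempty $\theta$-region a universal statement entails the corresponding existential one, and it is the only place the assumption $\MM\models\exists x\theta$ enters.

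For the reverse inclusion $\mng{\exists x(\theta\land\varphi)}{\MM}\subseteq\mng{\forall x(\theta\to\varphi)}{\MM}$, which I expect to be the main step, I would take $\bar a$ satisfying $\exists x(\theta\land\varphi)$ and fix a witness $c\in M$ with $\MM\models\theta[\bar a^x_c]$ and $\MM\models\varphi[\bar a^x_c]$. To verify $\forall x(\theta\to\varphi)$ at $\bar a$, pick any $d\in M$ with $\MM\models\theta[\bar a^x_d]$; the goal is $\MM\models\varphi[\bar a^x_d]$. The key observation is that $\bar a^x_c$ and $\bar a^x_d$ differ only in the $x$-component, so that $(\bar a^x_c)^x_d=\bar a^x_d$, and both sequences satisfy $\theta$. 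Applying Definition~\ref{def:indep-prov} with base sequence $\bar a^x_c$ and new value $d$ then gives $\MM\models\varphi[\bar a^x_c]\iff\MM\models\varphi[\bar a^x_d]$, and since the left-hand side holds so does the right. The subtlety to get right here is to invoke non-dependence not at $\bar a$ directly but at the witness $\bar a^x_c$, recognizing that it and an arbitrary $\theta$-point $\bar a^x_d$ are joined by a single change of the $x$-coordinate; this is exactly the situation the definition is built to handle.

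Alternatively, the statement can be obtained purely algebraically from Proposition~\ref{prop:indep}: items \eqref{i-epindep} and \eqref{i-upindep0} give $\mng{\theta\land\bexists{x}{\theta}\varphi}{\MM}=\mng{\theta\land\varphi}{\MM}=\mng{\theta\land\bforall{x}{\theta}\varphi}{\MM}$, so the two meanings agree inside $\mng{\theta}{\MM}$. Since $\exists x(\theta\land\varphi)$ and $\forall x(\theta\to\varphi)$ both bind $x$, they are non-dependent of $x$ by Remark~\ref{rem:2}, \ie their meanings are cylinders in the $x$-direction; and $\MM\models\exists x\theta$ ensures that every $\bar a$ has some $\bar a^x_b\in\mng{\theta}{\MM}$. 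Transporting membership along the two cylinders through such a point forces them to coincide everywhere. Either route works, but I would present the pointwise argument as the main proof, as it makes transparent precisely where each hypothesis is used.
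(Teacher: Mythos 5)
Your main pointwise argument is correct and is essentially the paper's own proof: the same double inclusion, with non-dependence invoked at the witness $\bar a^x_c$ (using $(\bar a^x_c)^x_d=\bar a^x_d$) for the inclusion $\mng{\bexists{x}{\theta}\varphi}{\MM}\subseteq\mng{\bforall{x}{\theta}\varphi)}{\MM}$, and $\MM\models\exists x\,\theta$ supplying the witness for the reverse inclusion, matching even the paper's closing remark about which hypothesis each inclusion needs. Your alternative algebraic route via items \eqref{i-epindep} and \eqref{i-upindep0} of Proposition~\ref{prop:indep} together with cylindricity is also sound, but since you present the pointwise argument as the main proof, your proposal and the paper's proof essentially coincide.
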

  \begin{proof}
  Let $\bar a \in \mng{\bexists{x}{\theta}\varphi}{\MM}$. Then,
     for some $b\in M$, $\bar a^x_b \in \mng{\theta}{\MM}\cap
     \mng{\varphi}{\MM}$ by definitions. Let $c\in M$ be arbitrary.
     Since $\varphi$ is non-dependent of $x$ in $\MM$ provided
     $\theta$, we have that if $\bar a^x_c\in \mng{\theta}{\MM}$, then
     $\bar a^x_c\in \mng{\varphi}{\MM}$ as $\bar a^x_b \in
     \mng{\theta}{\MM}$ and $\bar a^x_b \in \mng{\varphi}{\MM}$.
     Then, since either $\bar a^x_c \in \mng{\neg\theta}{\MM}$ or
     $\bar a^x_c \in \mng{\theta}{\MM}$, we have $\bar a^x_c \in
     \mng{\neg\theta}{\MM} \cup
     \mng{\varphi}{\MM}=\mng{\theta\to\varphi}{\MM}$. Because $c$ was
     arbitrary, this means that $\bar a\in \mng{\forall x
       (\theta\to\varphi)}{\MM} =
     \mng{\bforall{x}{\theta}\varphi)}{\MM}$. This proves inclusion
     $\mng{\bexists{x}{\theta}\varphi}{\MM} \subseteq
     \mng{\bforall{x}{\theta}\varphi)}{\MM}$.

     To prove the other inclusion, let $\bar a \in
     \mng{\bforall{x}{\theta}\varphi)}{\MM}$, \ie for all $c\in M$, if
     $\bar a^x_c \in \mng{\theta}{\MM}$ holds, then so does $\bar
     a^x_c \in \mng{\varphi}{\MM}$. By assumption $\MM\models\exists x
     \theta$, there is some $b\in M$ such that $\bar a^x_b\in
     \mng{\theta}{\MM}$. By the above, for this $b$, we also have
     $\bar a^x_b \in \mng{\varphi}{\MM}$. In other words, $\bar a\in
     \mng{\exists x (\theta \land
       \varphi)}{\MM}=\mng{\bexists{x}{\theta}\varphi}{\MM}$, which
     proves the other inclusion
     $\mng{\bexists{x}{\theta}\varphi}{\MM}\supseteq\mng{\bforall{x}{\theta}\varphi)}{\MM}$.
  \end{proof} 
  
We note that condition $\MM\models\exists x \theta$ is needed for inclusion $\mng{\bexists{x}{\theta}\varphi}{\MM}\supseteq\mng{\bforall{x}{\theta}\varphi)}{\MM}$ and the non-dependence condition is needed for inclusion $\mng{\bexists{x}{\theta}\varphi}{\MM}\subseteq\mng{\bforall{x}{\theta}\varphi)}{\MM}$.

\begin{prop}\label{prop-B} Bounded universal quantifiers distribute over conjunction, \ie 
 \begin{align*}
    \bforall{x}{\phi}(\varphi \land \psi) &\equiv \bforall{x}{\phi}  \varphi \land \bforall{x}{\phi} \psi, \ \text{ and hence}\\
    \mng{\bforall{x}{\phi}(\varphi \land \psi)}{\MM} &= \mng{\bforall{x}{\phi}  \varphi \land \bforall{x}{\phi} \psi}{\MM}.
  \end{align*}
\end{prop}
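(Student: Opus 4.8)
The plan is to unpack the bounded universal quantifier via its definition in \eqref{eq-bquant} and then reduce the statement to two standard equivalences of first-order logic. Writing $\bforall{x}{\phi}\chi$ as $\forall x(\phi\to\chi)$, the left-hand side becomes $\forall x\big(\phi\to(\varphi\land\psi)\big)$ while the right-hand side becomes $\forall x(\phi\to\varphi)\land\forall x(\phi\to\psi)$. So everything hinges on just two facts: the propositional tautology $\phi\to(\varphi\land\psi)\equiv(\phi\to\varphi)\land(\phi\to\psi)$ (distributivity of implication over conjunction), together with the distributivity of the \emph{unbounded} universal quantifier over conjunction, $\forall x(\alpha\land\beta)\equiv\forall x\,\alpha\land\forall x\,\beta$.

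First I would establish the identity at the level of meanings in an arbitrary model $\MM$, in the same style as the earlier proofs, by the chain
\begin{align*}
&\mng{\bforall{x}{\phi}(\varphi \land \psi)}{\MM} \\
&= \mng{\forall x\big(\phi \to (\varphi \land \psi)\big)}{\MM} && \text{by \eqref{eq-bquant},}\\
&= \mng{\forall x\big((\phi \to \varphi) \land (\phi \to \psi)\big)}{\MM} && \text{by distributivity of $\to$ over $\land$,}\\
&= \mng{\forall x(\phi \to \varphi) \land \forall x(\phi \to \psi)}{\MM} && \text{by distributivity of $\forall$ over $\land$,}\\
&= \mng{\bforall{x}{\phi}\varphi \land \bforall{x}{\phi}\psi}{\MM} && \text{by \eqref{eq-bquant}.}
\end{align*}
Since this equality holds in every model $\MM$, the two formulas have identical meaning everywhere and are therefore logically equivalent, which is precisely the claimed $\equiv$; the displayed meaning equality is then simply the instance obtained by fixing $\MM$, so the ``and hence'' is immediate.

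I do not expect a real obstacle here, but the one point requiring care is the second step: unlike the equivalences collected in Remark~\ref{rem-hinman}, I must \emph{not} assume that $x$ fails to occur free in $\phi$, so $\phi$ cannot be pulled out of the quantifier. The only genuine ingredient is that the unbounded $\forall$ distributes over $\land$, which is a standard first-order equivalence (the dual of the distributivity of $\exists$ over $\lor$ already invoked in the proof of Proposition~\ref{prop:2}). Should one prefer not to cite that law, I would verify it directly from the semantics: $\MM\models\forall x(\alpha\land\beta)[\bar a]$ holds iff for every $b\in M$ both $\MM\models\alpha[\bar a^x_b]$ and $\MM\models\beta[\bar a^x_b]$ hold, which separates at once into the two conjuncts; but this is routine and I would relegate it to the citation of the quantifier equivalences in the literature.
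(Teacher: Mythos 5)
Your proof is correct and takes essentially the same route as the paper's: unpack the bounded quantifier via \eqref{eq-bquant}, distribute the implication over the conjunction, and then use distributivity of $\forall$ over $\land$ (you are also right that Remark~\ref{rem-hinman} is inapplicable here since $x$ may occur free in $\phi$). The only cosmetic difference is that the paper, because $\to$ is an abbreviation in its setup, splits your ``distributivity of $\to$ over $\land$'' step into the definition of $\to$ followed by distributivity of $\lor$ over $\land$.
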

\begin{proof} After unpacking the bounded quantifiers, the statement can be proved\footnote{While the proof of this is straightforward, we include it here due to our peculiar use of bounded quantifiers.} as: 
\begin{align*}
& \forall x \big(\phi \rightarrow (\varphi \land \psi)\big) \\
& \equiv \forall x \big(\neg \phi \lor (\varphi \land \psi)\big) &&\text{by the definition of implication.}\\
& \equiv \forall x \big((\neg \phi \lor \varphi)\land(\neg \phi \lor \psi)\big) &&\text{by the distributivity of $\lor$ over $\land$.}\\
& \equiv \forall x (\neg \phi \lor \varphi)\land\forall x(\neg \phi \lor \psi) &&\text{by the distributivity of $\forall$ over $\land$.} \\
& \equiv \forall x (\phi \rightarrow \varphi) \land \forall x (\phi \rightarrow \psi) &&\text{by the definition of implication.}
\end{align*}
\end{proof}

In general, quantifiers do not distribute over logic operators. For example, $\forall x\big(\phi(x) \lor \psi(x)\big)$ has a different meaning than $\forall x \big(\phi(x)\big) \lor \forall x \big(\psi(x)\big)$, which is clear when we consider that ``\textit{all numbers are odd or even}'' is very different from ``\textit{all numbers are odd or all numbers are even}''. However, under certain conditions, using Proposition~\ref{prop-C} below, it is possible to bring out quantifiers which are nested within operators\ ---\ in the case of the above example, if $\phi$ and $\psi$ are non-dependent\footnote{This is obviously not the case for ``$x$ is odd'' and ``$x$ is even''.} of $x$, by assigning the function $f\big(\phi(x), \psi(x)\big)$ in Proposition~\ref{prop-C} to the logical \textit{or} $\lor$.

Now, in Proposition~\ref{prop-C}, we are going to prove that bounded universal quantifier
  $\bforall{x}{\theta}$ can be brought out from arbitrary boolean
  combination of formulas if they are non-dependent of variable $x$
  provided $\theta$.

\begin{prop}\label{prop-C}
Let $f$ be any boolean expression, then
\begin{equation*}
  \mng{\exists x \theta \rightarrow f\big(\bforall {x}{\theta} \varphi_1, \dots, \bforall {x}{\theta} \varphi_n\big)}{\MM} = \mng{\bforall{x}{\theta} f( \varphi_1, \dots, \varphi_n)}{\MM}
\end{equation*}
if all $\varphi_i$ are non-dependent of $x$ in $\MM$ provided
$\theta$.
\end{prop}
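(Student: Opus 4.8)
The plan is to proceed by induction on the structure of the boolean expression $f$. Since $\{\neg,\land\}$ is functionally complete and the meaning of $f(\varphi_1,\dots,\varphi_n)$ depends only on the boolean function computed by $f$ (boolean connectives being truth-functional), I may assume without loss of generality that $f$ is built from its arguments using only $\neg$ and $\land$. For a subexpression $g$ of $f$ write $\psi_g\defeq g(\varphi_1,\dots,\varphi_n)$ and $\hat\psi_g\defeq g(\bforall{x}{\theta}\varphi_1,\dots,\bforall{x}{\theta}\varphi_n)$, so that the goal is $\mng{\exists x\theta\to\hat\psi_f}{\MM}=\mng{\bforall{x}{\theta}\psi_f}{\MM}$. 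A preliminary observation, used throughout, is that by Boolean-closedness (Remark~\ref{rem:2}) every $\psi_g$ is again non-dependent of $x$ in $\MM$ provided $\theta$, so the earlier propositions apply to each $\psi_g$.

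For the base case $f$ is a projection, say $\psi_f=\varphi_i$ and $\hat\psi_f=\bforall{x}{\theta}\varphi_i$, so I must show $\mng{\exists x\theta\to\bforall{x}{\theta}\varphi_i}{\MM}=\mng{\bforall{x}{\theta}\varphi_i}{\MM}$; this is in fact a validity needing no non-dependence, proved by splitting on whether $\exists x\theta$ holds at a given $\bar a$: if it holds, the left-hand implication reduces to its consequent, and if it fails then $\theta$ fails at every $x$-variant of $\bar a$, so both $\bforall{x}{\theta}\varphi_i=\forall x(\theta\to\varphi_i)$ and the guarded implication hold vacuously. For the conjunction case $f=g\land h$, I would use Proposition~\ref{prop-B} to rewrite the right-hand side as $\mng{\bforall{x}{\theta}\psi_g}{\MM}\cap\mng{\bforall{x}{\theta}\psi_h}{\MM}$, apply the induction hypothesis to each factor, and then collapse the two guarded implications using the propositional identity $(A\to B)\land(A\to C)\equiv A\to(B\land C)$ to recover $\mng{\exists x\theta\to(\hat\psi_g\land\hat\psi_h)}{\MM}$.

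The negation case $f=\neg g$ is the main obstacle, because bringing the quantifier past a negation is exactly where the existence of a $\theta$-witness matters, and we have it only guarded by $\exists x\theta$. Here I would first apply Proposition~\ref{prop:3} to $\psi_g$ (legitimate since $\psi_g$ is non-dependent provided $\theta$) to obtain $\mng{\bforall{x}{\theta}\neg\psi_g}{\MM}=\mng{\exists x\theta\to\neg\bforall{x}{\theta}\psi_g}{\MM}$, which is precisely the right-hand side of the identity to be proved for $f=\neg g$. It then remains to identify this with the left-hand side $\mng{\exists x\theta\to\neg\hat\psi_g}{\MM}$. The device for this is a short guard lemma: writing $E\defeq\mng{\exists x\theta}{\MM}$, one has $\mng{\exists x\theta\to\chi}{\MM}=(M^\omega\setminus E)\cup\mng{\chi}{\MM}$, so this set depends on $\chi$ only through $\mng{\chi}{\MM}\cap E$. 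Intersecting the induction hypothesis $\mng{\exists x\theta\to\hat\psi_g}{\MM}=\mng{\bforall{x}{\theta}\psi_g}{\MM}$ with $E$ gives $\mng{\hat\psi_g}{\MM}\cap E=\mng{\bforall{x}{\theta}\psi_g}{\MM}\cap E$; complementing within $E$ yields $\mng{\neg\hat\psi_g}{\MM}\cap E=\mng{\neg\bforall{x}{\theta}\psi_g}{\MM}\cap E$, and the guard lemma then equates the two guarded implications, closing the case.

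Finally, I would note that the statement also admits a direct semantic proof bypassing the induction: for a fixed $\bar a$, if $\exists x\theta$ fails at $\bar a$ both sides hold vacuously, while if it holds, non-dependence forces each $\varphi_i$ to take a constant truth value across all $\theta$-admissible $x$-variants of $\bar a$, whence $\bforall{x}{\theta}\varphi_i$ and $\bforall{x}{\theta}f(\varphi_1,\dots,\varphi_n)$ are both determined by these constants, and truth-functionality of $f$ makes the two sides agree. I expect the negation step (or, in the semantic version, the bookkeeping of the $\exists x\theta$ case split) to be the only genuinely delicate point; the remainder is routine propositional and quantifier algebra supported by Proposition~\ref{prop-B}, Proposition~\ref{prop:3}, and Boolean-closedness.
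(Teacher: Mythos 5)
Your inductive proof is correct and follows essentially the same route as the paper's: the same base case (the validity $\mng{\bforall{x}{\theta}\varphi_i)}{\MM}=\mng{\exists x\theta\to\bforall{x}{\theta}\varphi_i)}{\MM}$, which the paper derives algebraically from $\mng{\theta\land\neg\varphi_i}{\MM}\subseteq\mng{\theta}{\MM}$ and you get by a case split on $\exists x\theta$ --- both correctly need no non-dependence), the same conjunction step via Proposition~\ref{prop-B} and $(A\to B)\land(A\to C)\equiv A\to(B\land C)$, and the same negation step via Proposition~\ref{prop:3} applied to $\psi_g$, licensed by Boolean-closedness from Remark~\ref{rem:2}. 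Your ``guard lemma'' --- that $\mng{\exists x\theta\to\chi}{\MM}$ depends on $\chi$ only through $\mng{\chi}{\MM}\cap\mng{\exists x\theta}{\MM}$, so the induction hypothesis can be intersected with the guard and complemented inside it --- is a clean set-theoretic repackaging of the paper's explicit chain of De Morgan, distributivity of $\lor$ over $\land$, and excluded middle in the negation case; it is the same argument, made structural rather than computational.

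The one genuinely different contribution is your closing direct semantic proof. Fixing $\bar a$ and splitting on $\exists x\theta$, non-dependence provided $\theta$ forces each $\varphi_i$ to a constant truth value $t_i$ on the set of $\theta$-admissible $x$-variants of $\bar a$ (chaining two admissible variants $\bar a^x_b$, $\bar a^x_c$ through the first, which is exactly what Definition~\ref{def:indep-prov} licenses), whence both sides evaluate to the boolean function of $f$ applied to $(t_1,\dots,t_n)$; when $\exists x\theta$ fails, both sides are vacuously true. This bypasses the induction and Propositions~\ref{prop-B} and~\ref{prop:3} entirely and is more elementary, at the cost of not producing the intermediate meaning-algebra identities that the paper's calculus-style proof generates and reuses (notably in Theorem~\ref{prop-new}). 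If you write the semantic version out, make explicit that $t_i$ is well-defined precisely because $\exists x\theta$ holds at $\bar a$, i.e., the admissible set is nonempty; that nonemptiness is also what makes the bounded universal on the right non-vacuous, and it is the only place the guard genuinely matters.
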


\begin{proof}
  We prove the statement by induction on the complexity of $f(\varphi_1, \dots, \varphi_n)$. Let us first show that the statement holds for each $\varphi_i$ in $f(\varphi_1, \dots, \varphi_n)$. Let $\varphi$ be any of those $\varphi_i$s. By Remark~\ref{rem:mng-nice}, we have $\mng{\theta \land \neg \varphi}{\MM}=\mng{\theta}{\MM}\cap\mng{\neg \varphi}{\MM}$. So $\mng{\theta \land \neg \varphi}{\MM} \subseteq\mng{\theta}{\MM}$. From this, by Remark~\ref{rem:mng-nice},  we get $\mng{\neg\exists x(\theta \land \neg \varphi)}{\MM}\supseteq\mng{\neg\exists x \theta}{\MM}$. Which is the same as $\mng{\forall x(\theta \to \varphi)}{\MM}\supseteq\mng{\neg\exists x \theta}{\MM}$  by the definitions of $\to$ and $\forall$, De Morgan and double negation. Hence $\mng{\forall x(\theta \to \varphi)}{\MM}=\mng{\neg\exists x \theta}{\MM}\cup \mng{\forall x(\theta \to \varphi)}{\MM}$. Which is  $\mng{\forall x(\theta \to \varphi)}{\MM}=\mng{\neg\exists x \theta\lor \forall x(\theta \to \varphi)}{\MM}$ by Remark~\ref{rem:mng-nice}. From this, by the definition of $\to$ and \eqref{eq-bquant}, we get the desired identity $\mng{\bforall{x}{\theta}\varphi}{\MM}=\mng{\exists x \theta\to \bforall{x}{\theta}\varphi}{\MM}$.

    Since, by Remark~\ref{rem-abbr}, $f$ is equivalent to an expression in which only negation and conjunction is used, it is enough to show the induction steps for these two connectives.

  Let us first assume that $f$ is of the form $f = g\land h$ such that we already know the statement for $g$ and $h$, \ie the followings hold
  \begin{align}\label{ind-g}
    \mng{\bforall{x}{\theta} g( \varphi_1, \dots, \varphi_n)}{\MM} &= \mng{\exists x \theta \rightarrow g\big(\bforall {x}{\theta} \varphi_1, \dots, \bforall {x}{\theta} \varphi_n\big)}{\MM}, \text{ and}     \\
\label{ind-h}
    \mng{\bforall{x}{\theta} h( \varphi_1, \dots, \varphi_n)}{\MM} &= \mng{\exists x \theta \rightarrow h\big(\bforall {x}{\theta} \varphi_1, \dots, \bforall{x}{\theta} \varphi_n\big)}{\MM}.     
  \end{align}

   \begin{align*}
     & \mng{\bforall{x}{\theta} \big( g(\varphi_1, \dots, \varphi_n) \land h(\varphi_1, \dots, \varphi_n) \big)}{\MM} \\
     & = \mng{\bforall{x}{\theta} g(\varphi_1, \dots, \varphi_n)}{\MM} \cap \mng{\bforall{x}{\theta} h(\varphi_1, \dots, \varphi_n)}{\MM} &&\text{ by Prop.~\ref{prop-B} and Rem.~\ref{rem:mng-nice}.} \\
  & = \mng{\exists x \theta \rightarrow g\big(\bforall {x}{\theta} \varphi_1, \dots, \bforall{x}{\theta} \varphi_n\big)}{\MM} && \\
  &\qquad\cap \mng{\exists x \theta \rightarrow h\big(\bforall {x}{\theta} \varphi_1, \dots, \bforall{x}{\theta} \varphi_n\big)}{\MM}  &&\text{by ind.\ hypotheses: \eqref{ind-g} and \eqref{ind-h}.}\\
     & = \mng{\big(\neg \exists x \theta \lor g(\bforall {x}{\theta} \varphi_1, \dots)\big)\land \big( \neg\exists x \theta \lor  h(\dots)\big)}{\MM} &&\text{by the definition of $\to$ and Rem.\ref{rem:mng-nice}.}\\
  & = \mng{\neg \exists x \theta \lor \big(g(\bforall {x}{\theta} \varphi_1,\dots)\big)\land h(\bforall {x}{\theta} \varphi_1,\dots)\big)}{\MM}  &&\text{by  the distributivity of $\lor$ over $\land$.}\\
  & = \mng{\exists x \theta \rightarrow (g \land h)\big(\bforall {x}{\theta} \varphi_1, \dots, \bforall {x}{\theta} \varphi_n\big)}{\MM} &&\text{by the definition of $\to$.}
\end{align*}

Let us now assume that $f$ is of the form $f = \neg g$ such that we already know the statement for $g$.

\begin{align*}
  &\mng{\bforall {x}{\theta} \neg g(\varphi_1, \dots, \varphi_n) }{\MM}\\
  &= \mng{\exists x \theta \rightarrow \neg \bforall{x}{\theta} g(\varphi_1, \dots, \varphi_n) }{\MM}  &&\text{by Prop.~\ref{prop:3} on $g(\varphi_1...)$ and Remark~\ref{rem:2}.} \\
  &= - \mng{\exists x \theta}{\MM} \cup - \mng{\bforall{x}{\theta} g(\varphi_1, \dots, \varphi_n) }{\MM} &&\text{by Remark~\ref{rem:mng-nice} and the definition of $\to$.}\\
  &= - \mng{\exists x \theta}{\MM} \cup - \mng{\exists x  \theta \rightarrow g\big(\bforall{x}{\theta} \varphi_1, \dots\big) }{\MM} &&\text{by induction hypothesis: \eqref{ind-g}.}\\
   &= \mng{\neg \exists x \theta\lor \neg\big(\neg\exists x  \theta \lor g\big(\bforall {x}{\theta} \varphi_1, \dots\big)\big) }{\MM} &&\text{by the definition of $\to$ and Remark~\ref{rem:mng-nice}.}\\
   &= \mng{\neg \exists x \theta\lor \big(\exists x  \theta \land\neg g\big(\bforall {x}{\theta} \varphi_1, \dots\big)\big) }{\MM} &&\text{by De Morgan and double negation.}\\
  &= \mng{(\neg \exists x \theta\lor\exists x  \theta) \land\big(\neg \exists x \theta\lor \neg g(\bforall {x}{\theta} \varphi_1,\dots)\big) }{\MM} &&\text{by the distributivity of $\lor$ over $\land$.}\\
       &= \mng{\neg \exists x \theta\lor \neg g\big(\bforall {x}{\theta} \varphi_1, \dots, \bforall {x}{\theta} \varphi_n\big) }{\MM} &&\text{by excluded middle.}\\
     &= \mng{ \exists x \theta\to \neg g\big(\bforall {x}{\theta} \varphi_1, \dots, \bforall {x}{\theta} \varphi_n\big) }{\MM} &&\text{by the definition of $\to$.}
\end{align*}
\end{proof}

  \begin{lemma}\label{prop-exists}  Assume that $\varphi$ is non-dependent of $x$ in $\MM$ provided $\theta$ and none of variables in $\bar z$  occur free in $\theta$. Then
    \begin{equation}
      \mng{\forall x \exists \bar z (\theta \to \psi)}{\MM}=\mng{\exists x \theta \to \exists \bar z \bforall{x}{\theta} \psi) }{\MM},
    \end{equation}
    and hence,
      \begin{equation}
      \mng{\forall x \exists \bar z (\theta \to \psi)}{\MM}=\mng{\exists \bar z \bforall{x}{\theta}\psi) }{\MM}\      \text{ if }\  \MM\models \exists x \theta.
      \end{equation}
  \end{lemma}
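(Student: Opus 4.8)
The plan is to prove the first meaning-equality for an arbitrary $\bar a\in M^\omega$, splitting on whether $\bar a\in\mng{\exists x\theta}{\MM}$; the second equality is then an immediate corollary, since under $\MM\models\exists x\theta$ we have $\mng{\exists x\theta}{\MM}=M^\omega$, so the antecedent $\exists x\theta$ disappears from the implication on the right. As a preliminary simplification, since none of the variables in $\bar z$ occurs free in $\theta$, item~\eqref{hm-et} of Remark~\ref{rem-hinman} gives $\exists\bar z(\theta\to\psi)\equiv\theta\to\exists\bar z\psi$, so the left-hand formula is equivalent to $\bforall{x}{\theta}\exists\bar z\psi$; moreover, by Remark~\ref{rem:2} the formula $\exists\bar z\psi$ is again non-dependent of $x$ provided $\theta$. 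Throughout I write $\bar a^{\bar z}_{\bar d}$ for the simultaneous substitution of the $\bar z$-block, and use that substitutions at $x$ and at $\bar z$ commute because $x\notin\bar z$.

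I would record the trivial inclusion first, with no use of non-dependence: a single $\bar z$-witness $\bar d$ with $\bar a^{\bar z}_{\bar d}\in\mng{\forall x(\theta\to\psi)}{\MM}$ already witnesses $\bar a^x_c\in\mng{\exists\bar z(\theta\to\psi)}{\MM}$ for every $c$, since $(\bar a^x_c)^{\bar z}_{\bar d}=(\bar a^{\bar z}_{\bar d})^x_c$. This gives $\mng{\exists x\theta\to\exists\bar z\forall x(\theta\to\psi)}{\MM}\subseteq\mng{\forall x\exists\bar z(\theta\to\psi)}{\MM}$. Next the region $\bar a\notin\mng{\exists x\theta}{\MM}$: here $\bar a^x_c\notin\mng{\theta}{\MM}$ for every $c$, and since $\bar z$ is not free in $\theta$ this persists after any $\bar z$-substitution, so $\theta\to\psi$ holds identically and $\bar a$ lies in both sides (the right-hand implication vacuously, the left by the trivial choice of $\bar z$-witness).

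The substantive case is $\bar a\in\mng{\exists x\theta}{\MM}$, say $\bar a^x_{b_0}\in\mng{\theta}{\MM}$, where I must establish the nontrivial inclusion $\mng{\forall x(\theta\to\exists\bar z\psi)}{\MM}\subseteq\mng{\exists\bar z\forall x(\theta\to\psi)}{\MM}$. Evaluating the hypothesis at $c=b_0$ produces a $\bar z$-witness $\bar d$ with $\bar e\defeq(\bar a^x_{b_0})^{\bar z}_{\bar d}\in\mng{\psi}{\MM}$, and $\bar e\in\mng{\theta}{\MM}$ because $\bar z$ is not free in $\theta$. I claim this one $\bar d$ works uniformly, i.e. $\bar a^{\bar z}_{\bar d}\in\mng{\forall x(\theta\to\psi)}{\MM}$. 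For arbitrary $c$ we have $(\bar a^{\bar z}_{\bar d})^x_c=\bar e^x_c$; if $\bar e^x_c\notin\mng{\theta}{\MM}$ the implication holds trivially, and if $\bar e^x_c\in\mng{\theta}{\MM}$ then, since $\bar e$ and $\bar e^x_c$ both satisfy $\theta$ and $\bar e\in\mng{\psi}{\MM}$, Definition~\ref{def:indep-prov} applied with anchor $\bar e$ (non-dependence of $\psi$ provided $\theta$) yields $\bar e^x_c\in\mng{\psi}{\MM}$, so $\theta\to\psi$ holds at $\bar e^x_c$ as well. Hence $\bar d$ witnesses $\bar a\in\mng{\exists\bar z\forall x(\theta\to\psi)}{\MM}$.

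The main obstacle is precisely this uniformity step: passing from the a priori $c$-dependent choice of $\bar z$-witnesses supplied by $\forall x\exists\bar z$ to a single $\bar z$-witness valid for all $c$. Non-dependence is what licenses transporting the witness found at $b_0$ to every other $c$ satisfying $\theta$, and the hypothesis that $\bar z$ does not occur free in $\theta$ is used twice: once to pull $\theta$ out of $\exists\bar z$ in the preliminary step, and once to make the implication hold automatically at those $c$ that falsify $\theta$, irrespective of the $\bar z$-witness. As an alternative to the explicit witness bookkeeping, on the region $\MM\models\exists x\theta$ one can instead apply Proposition~\ref{prop-h} to the non-dependent formula $\exists\bar z\psi$ and separately to $\psi$, and then use item~\eqref{hm-e} of Remark~\ref{rem-hinman} to reduce both sides to $\exists x\exists\bar z(\theta\land\psi)$.
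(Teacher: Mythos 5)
Your proof is correct, but it takes a genuinely different route from the paper's. The paper proves the first equality by a three-step equational rewrite: it replaces the matrix $\theta\to\psi$ by the meaning-equal formula $\theta\to\forall x(\theta\to\psi)$ using item~\eqref{i-upindep} of Proposition~\ref{prop:indep} (this is the only place non-dependence enters, together with the tacit fact that meaning-equality in a fixed model is a congruence under quantification), then pulls $\exists\bar z$ across $\theta\to$ by \eqref{hm-et} of Remark~\ref{rem-hinman} (using that no variable of $\bar z$ is free in $\theta$), and finally converts the outer $\forall x(\theta\to\cdot)$ into $\exists x\,\theta\to\cdot$ by \eqref{hm-ta}, since $x$ is not free in $\exists \bar z\,\forall x(\theta\to\psi)$. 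You instead argue semantically, pointwise on $\bar a$: the easy inclusion is the usual $\exists\bar z\forall x\Rightarrow\forall x\exists\bar z$ swap together with the vacuous region $\bar a\notin\mng{\exists x\theta}{\MM}$ (where both sides hold), and the substantive inclusion is a witness-uniformization step in which non-dependence of $\psi$ provided $\theta$ transports the $\bar z$-witness found at one $\theta$-point $b_0$ to every other $\theta$-point $c$; your verification via Definition~\ref{def:indep-prov} with anchor $\bar e$ and the commutation $(\bar a^x_c)^{\bar z}_{\bar d}=(\bar a^{\bar z}_{\bar d})^x_c$ is sound, and your case split covers both directions of the equality. What each approach buys: the paper's computation is shorter and reuses its established machinery, but it hides the combinatorial content; your argument is self-contained, exhibits the lemma as a uniformization of $\bar z$-witnesses across the $\theta$-slice, and shows exactly where each hypothesis is used (freeness of $\bar z$ in $\theta$ twice, non-dependence once). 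Two minor remarks: you silently assume $x\notin\bar z$, which the paper also presupposes implicitly (its substitution notation and the application in Theorem~\ref{prop-new} require it); and, like the paper's own proof, you read the hypothesis as non-dependence of $\psi$ --- the ``$\varphi$'' in the lemma's statement is evidently a typo. Your closing alternative via Proposition~\ref{prop-h} also works, though as stated that proposition assumes $\MM\models\exists x\,\theta$ globally, so to invoke it ``on the region'' $\mng{\exists x\theta}{\MM}$ you would need its pointwise relativized form, which its proof does in fact deliver.
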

  \begin{proof}
  \begin{align*}
    &\mng{\forall x \exists \bar z (\theta \to \psi)}{\MM} \\
    &=\mng{\forall x \exists \bar z \big(\theta \to  \forall x (\theta \to \psi)\big) }{\MM} && \text{by item \eqref{i-upindep} of Prop.\ref{prop:indep}.}\\
    &=\mng{\forall x \big(\theta \to  \exists \bar z\forall x (\theta \to \psi)\big) }{\MM} && \text{by \eqref{hm-et} Remark~\ref{rem-hinman}.}\\
    &=\mng{\exists x \theta \to \exists \bar z \forall x (\theta \to \psi) }{\MM} && \text{by \eqref{hm-ta} Remark~\ref{rem-hinman}.}
  \end{align*}
  \end{proof}

Now we are going to show that bounded quantifier $\bforall{x}{\theta}$ can be brought out from any formula built up from subformulas all of which are non-dependent of $x$ provided $\theta$. We only need to prove this for formulas in prenex normal form, since every formula of first-order logic can be written as such, see Theorem 2.2.34 in \cite[p.111]{hinman}.
  
  \begin{thm}\label{prop-new}
    Let $f$ be any boolean expression, let $\theta$ be formula such that no variables of $z_1,\dots,z_m$ occur free in $\theta$,  and let $Q_1,\dots, Q_m$ be an arbitrary series of universal and existential quantifiers. Then, if all $\varphi_i$ are non-dependent of $x$ in $\MM$ provided $\theta$,
\begin{multline*}\mng{\exists x \theta \rightarrow  Q_mz_m\dots Q_1 z_1 f\big(\bforall {x}{\theta} \varphi_1, \dots, \bforall {x}{\theta} \varphi_n\big)}{\MM} \\= \mng{\bforall {x}{\theta} Q_mz_m\dots Q_1 z_1 f( \varphi_1, \dots, \varphi_n)}{\MM},
    \end{multline*}
    and hence, if $\MM\models \exists x \theta$, then
  \begin{equation*}\mng{Q_mz_m\dots Q_1 z_1 f\big(\bforall {x}{\theta} \varphi_1, \dots, \bforall {x}{\theta} \varphi_n\big)}{\MM} \\= \mng{\bforall {x}{\theta} Q_mz_m\dots Q_1 z_1 f( \varphi_1, \dots, \varphi_n)}{\MM}.
    \end{equation*}
  \end{thm}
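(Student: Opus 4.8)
The plan is to prove the first displayed identity by induction on the number $m$ of quantifiers in the prefix $Q_m z_m\dots Q_1 z_1$, keeping the boolean core $f$ fixed. The base case $m=0$ is exactly Proposition~\ref{prop-C}, so all the boolean bookkeeping is already discharged there. Writing $F_{m-1}$ for $Q_{m-1}z_{m-1}\dots Q_1 z_1 f$, the induction hypothesis reads
\[\mng{\exists x\theta \to F_{m-1}(\bforall{x}{\theta}\varphi_1,\dots,\bforall{x}{\theta}\varphi_n)}{\MM} = \mng{\bforall{x}{\theta}F_{m-1}(\varphi_1,\dots,\varphi_n)}{\MM},\]
and I would split the inductive step according to whether the outermost quantifier $Q_m$ is $\forall$ or $\exists$. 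Throughout I will use that $\mng{\cdot}{\MM}$ sends equal meanings to equal meanings under a prefixed quantifier (existential quantification being cylindrification, universal its dual), so the induction hypothesis may be applied underneath $Q_m z_m$.

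For the universal case $Q_m=\forall$ the argument is pure quantifier shuffling. Since $z_m$ does not occur free in $\theta$, and hence not in $\exists x\theta$, I would pull $\forall z_m$ out across the guard by \eqref{hm-at} of Remark~\ref{rem-hinman}, apply the induction hypothesis underneath $\forall z_m$, commute the two universal quantifiers $\forall z_m$ and $\forall x$, and finally push $\forall z_m$ back inside past $\theta$ by \eqref{hm-at} once more; this lands precisely on $\mng{\bforall{x}{\theta}\forall z_m F_{m-1}(\varphi_1,\dots)}{\MM}$.

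The existential case $Q_m=\exists$ is the heart of the matter, because the naive swap $\exists z_m\forall x \rightleftarrows \forall x\exists z_m$ is invalid. On the left I would use \eqref{hm-et} to pull $\exists z_m$ out across the guard and then the induction hypothesis to reach $\mng{\exists z_m\bforall{x}{\theta}F_{m-1}(\varphi_1,\dots)}{\MM}$. On the right I would invoke Lemma~\ref{prop-exists} with $\psi = F_{m-1}(\varphi_1,\dots,\varphi_n)$ and $\bar z = z_m$, rewriting $\mng{\bforall{x}{\theta}\exists z_m F_{m-1}(\varphi_1,\dots)}{\MM}$ as $\mng{\exists x\theta \to \exists z_m\bforall{x}{\theta}F_{m-1}(\varphi_1,\dots)}{\MM}$. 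The two sides then differ only by the guard $\exists x\theta\to$, which is absorbed because $\mng{\neg\exists x\theta}{\MM}\subseteq\mng{\bforall{x}{\theta}F_{m-1}(\varphi_1,\dots)}{\MM}\subseteq\mng{\exists z_m\bforall{x}{\theta}F_{m-1}(\varphi_1,\dots)}{\MM}$: where no value of $x$ satisfies $\theta$ the bounded universal is vacuously true. Applying Lemma~\ref{prop-exists} requires that $\psi=F_{m-1}(\varphi_1,\dots,\varphi_n)$ itself be non-dependent of $x$ provided $\theta$; this I would establish separately, noting that $f(\varphi_1,\dots,\varphi_n)$ is non-dependent of $x$ provided $\theta$ by Boolean-closedness (Remarks~\ref{rem:2} and~\ref{rem:boolean}), and that prefixing a quantifier $Q_j z_j$ with $z_j$ not free in $\theta$ preserves non-dependence (for $\exists$ by the corresponding bullet of Remark~\ref{rem:2}, and for $\forall$ by writing $\forall z_j=\neg\exists z_j\neg$ and reusing Boolean-closedness).

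The main obstacle is thus precisely this existential step, where quantifier order genuinely matters; the whole purpose of Lemma~\ref{prop-exists} is to license the reordering up to the harmless guard $\exists x\theta\to$, and the verification that the guard may be dropped is what makes the two sides coincide. Finally, the ``hence'' statement is immediate: if $\MM\models\exists x\theta$ then $\mng{\exists x\theta}{\MM}=M^\omega$, so $\mng{\exists x\theta \to \Phi}{\MM}=\mng{\Phi}{\MM}$ for every formula $\Phi$, and applying this with $\Phi = Q_m z_m\dots Q_1 z_1 f(\bforall{x}{\theta}\varphi_1,\dots)$ strips the guard from the first identity.
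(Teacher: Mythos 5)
Your proposal is correct and follows essentially the same route as the paper's proof: induction on $m$ with Proposition~\ref{prop-C} as the base case, the \eqref{hm-at}/\eqref{hm-et} shuffles of Remark~\ref{rem-hinman} plus quantifier interchange for the universal step, and Lemma~\ref{prop-exists} (with non-dependence of the quantified matrix secured via Remarks~\ref{rem:2} and~\ref{rem:boolean}) as the key tool for the existential step. The only cosmetic differences are that you meet in the middle and absorb the single guard $\exists x\,\theta\to$ by the vacuous-truth inclusion $\mng{\neg\exists x\,\theta}{\MM}\subseteq\mng{\bforall{x}{\theta}\,F_{m-1})}{\MM}$, where the paper instead chains through a doubled guard and collapses it by exportation and idempotency, and that you spell out explicitly the closure of non-dependence under quantifier prefixes, which the paper compresses into a citation of Remark~\ref{rem:2}.
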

  \begin{proof}
    We prove the statement by induction on the number $m$ of
    (nonbounded) quantifiers. If $m=0$, we have the
    statement by Proposition~\ref{prop-C}.  Now assume that we have
    the statement for some $m=k$, and prove that we have it for $m=k+1$.

    There are two cases:
    \begin{enumerate}
      \item[1.)] either $Q_{k+1}=\exists$, and then 
        \begin{align*}
      &\mng{\forall x \big(\theta \rightarrow \exists  z_{k+1} Q_kz_k\dots Q_1 z_1 f( \varphi_1, \dots, \varphi_n)\big)}{\MM}\\
      & =\mng{\forall x  \exists  z_{k+1} \big(\theta \rightarrow Q_kz_k\dots Q_1 z_1 f( \varphi_1, \dots, \varphi_n)\big)}{\MM} && \text{by \eqref{hm-et} of Remark~\ref{rem-hinman}.}\\
      & =\mng{\exists x \theta \to \exists  z_{k+1} \bforall{x}{\theta} Q_kz_k\dots Q_1 z_1 f( \varphi_1, \dots, \varphi_n)}{\MM} && \text{by Lemma~\ref{prop-exists} and Remark~\ref{rem:2}.}\\
      & =\mng{\exists x \theta \to \exists  z_{k+1}  \big(\exists x\theta \rightarrow Q_kz_k\dots Q_1 z_1 f\big(\bforall{x}{\theta} \varphi_1, \dots\big)}{\MM} && \text{by induction hypothesis.}\\
      & =\mng{\exists x \theta \to \big(\exists x\theta \rightarrow \exists  z_{k+1}  Q_kz_k\dots Q_1 z_1 f\big(\bforall {x}{\theta} \varphi_1, \dots\big)}{\MM} && \text{by \eqref{hm-et} of Remark~\ref{rem-hinman}.}\\
      & =\mng{\exists x \theta \rightarrow  \exists  z_{k+1} Q_kz_k\dots Q_1 z_1 f\big(\bforall {x}{\theta} \varphi_1, \dots\big)}{\MM} && \text{by exportation and idempotency.}
    \end{align*}

    \item[2.)] or either $Q_{k+1}=\forall$, and then 
      \begin{align*}
      &\mng{\forall x \big(\theta \rightarrow \forall  z_{k+1} Q_kz_k\dots Q_1 z_1 f( \varphi_1, \dots, \varphi_n)\big)}{\MM}\\
      & =\mng{\forall x  \forall  z_{k+1} \big(\theta \rightarrow Q_kz_k\dots Q_1 z_1 f( \varphi_1, \dots, \varphi_n)\big)}{\MM} && \text{by \eqref{hm-at} of Remark~\ref{rem-hinman}.}\\
      & =\mng{\forall z_{k+1} \bforall{x}{\theta} Q_kz_k\dots Q_1 z_1 f( \varphi_1, \dots, \varphi_n)}{\MM} && \text{by quantifier interchange and \eqref{eq-bquant}.}\\
      & =\mng{\forall  z_{k+1}  \big(\exists x\theta \rightarrow Q_kz_k\dots Q_1 z_1 f\big(\bforall{x}{\theta} \varphi_1, \dots\big)}{\MM} && \text{by induction hypothesis.}\\
      & =\mng{\exists x\theta \rightarrow \forall  z_{k+1}  Q_kz_k\dots Q_1 z_1 f\big(\bforall {x}{\theta} \varphi_1, \dots\big)}{\MM} && \text{by \eqref{hm-at} of Remark~\ref{rem-hinman}.}
        \end{align*}
    \end{enumerate}
  \end{proof}

    \begin{rem}
      By Proposition~\ref{prop-h} and Remarks~\ref{rem:2} and \ref{rem:boolean}, if the conditions of Theorem~\ref{prop-new} hold and $\MM\models \exists x \theta$, then also the existential quantifiers $\bexists {x}{\theta}$ can be brought out from the corresponding formula, i.e.:
      \begin{equation*}
        \mng{Q_mz_m\dots Q_1 z_1 f\big(\bexists {x}{\theta} \varphi_1, \dots, \bexists {x}{\theta} \varphi_n\big)}{\MM} \\= \mng{\bexists {x}{\theta} Q_mz_m\dots Q_1 z_1 f( \varphi_1, \dots, \varphi_n)}{\MM}.
    \end{equation*} Moreover, any of the quantifiers $\bexists{x}{\theta}$ can freely be replaced by quantifiers $\bforall{x}{\theta}$ in this equation above.
    \end{rem}

\section{Applications}\label{sec:Application}

A main source of applications of these results is simplifying
translations of formulas, where bounded quantifiers appear redundantly
after some translation. Such a situation occurred when special
relativity was interpreted into classical kinematics, see \citep{diss}
and \citep{ClassRelKin}. Here we generalize the simplification rules used there without taking any special restrictions on the formulas $\varphi$, $\iota$ and $\varepsilon$ apart from the variable non-dependence condition introduced in this paper and that the provided condition is of the form $\theta=\iota\land\varepsilon$.
  
For example, in \citep[\S~11~Appendix]{diss}, we define for classical
kinematics that formula $\varphi$ is
\emph{ether-observer-independent} in variable $b$ provided that $k_1$,
\ldots, $k_n$ are inertial observers if the truth or falsehood of
$\varphi$ does not depend on to which ether observer we evaluated $b$:
\begin{equation*}
  EOI^{k_1,\ldots,k_n}_{b}[\varphi] \defiff  \sy{{ClassicalKin}}\vdash\bforall{k_1, \ldots, k_n}{ IOb}\bforall{e_1, e_2}{\Ether} 
  [\varphi(e_1 / b) \leftrightarrow \varphi(e_2 / b)],
\end{equation*}
where $\varphi(e/b)$ means that $b$ gets substituted by $e$ in all free occurrences of $b$ in $\varphi$.

Here, $\bforall{k_1, \ldots, k_n}{ IOb}$ is shorthand using bounded quantifiers for $\forall k_1 \ldots \forall k_n \big( \IOb(k_1) \land \ldots \land \IOb(k_n)\to \dots\big)$, which corresponds to $\iota$ and which asserts that $k_1,\ldots,k_n$ are inertial observers. $\bforall{e_1, e_2}{\Ether}$ is shorthand for $\forall e_1 \forall e_2 \big( \Ether(e_1) \land \Ether(e_2) \to \dots\big)$, which here is $\varepsilon$ and which postulates that $e_1$ and $e_2$ are Ether-observers. So, if we can replace $b$ in $\varphi$ by any ether observer, and $k_1 \ldots k_n$ occuring in $\varphi$ are inertial observers, then $\varphi$ is indeed ether-observer-independent in $b$.

As another example, one of the formulations of the principle of relativity in \citep[Section 4.1]{DiffFormRelPrinc} states that the truth of certain formulas $\varphi(b, \bar x)$ describing experimental scenarios with numerical parameters $\bar x$ does not depend on the choice of inertial observer $b$. This is formulated as an axiom scheme $\mathsf{SPR}^+$ consisting formulas of the form
  \begin{equation*}
    IOb(k)\land IOb(h) \to (\varphi(k,\bar x) \leftrightarrow \varphi(h,\bar x)),
  \end{equation*}
where $\varphi(k,\bar x)$ and $\varphi(h,\bar x)$ are the formula
$\varphi(b,\bar x)$ but variable $b$ is substituted by $k$ and $h$,
respectively.

Let us first connect these notions of independence from both examples above to the non-dependence one introduced in this paper. We will use the following notation for Tarski's substitution:\footnote{This definition of substitution is equivalent to Tarski's definition $\varphi(x/y) \defiff \forall x (x=y \to \varphi)$ in \citep[p.~62]{Tarski1964}, however we use Enderton's notation $\varphi^x_y$ from \citep[p.~112]{End2001} in stead of Tarski's $\varphi(x/y)$. Enderton's definition is equivalent with Tarski's for proper substitution, see \citep[p.~130]{End2001}.}
\begin{equation}\label{eq:substitute}
\varphi^x_y \defiff \exists x (x=y \land \varphi).
\end{equation}

\begin{rem}\label{rem-substitution} Let us note that, by \eqref{eq:substitute} and the definition when $\bar{a}$ satisfies formula $\varphi$ in model $\MM$, we have 
   $\MM\models \varphi^{x}_{v_j}[\bar a]$ if{}f $\MM\models \varphi[\bar  a^{x}_{a_j}]$.
\end{rem}

\begin{prop}\label{prop-reform}
Formula $\varphi$ is non-dependent of $x$ in $\MM$ provided $\theta$
if{}f
\begin{equation}\label{eq-substituted}
  \MM\models (\theta^x_y \land \theta^x_z) \to (\varphi^x_y \leftrightarrow \varphi^x_z)
\end{equation}
for some variable $y$ and $z$ that occur neither in $\varphi$ nor in $\theta$. 
\end{prop}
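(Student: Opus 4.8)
The plan is to reduce everything to the semantic characterization of substitution in Remark~\ref{rem-substitution}, which says that $\MM\models\varphi^x_y[\bar a]$ holds exactly when $\MM\models\varphi[\bar a^x_{a_y}]$, and likewise $\MM\models\varphi^x_z[\bar a]$ iff $\MM\models\varphi[\bar a^x_{a_z}]$, with the analogous equivalences for $\theta$; here $a_y,a_z$ denote the $y$- and $z$-components of $\bar a$. Since $y,z$ are chosen fresh (so in particular $y\ne x$, $z\ne x$, and $y\ne z$), validity of \eqref{eq-substituted} in $\MM$ unpacks to the assertion that for every $\bar a\in M^\omega$, if $\MM\models\theta[\bar a^x_{a_y}]$ and $\MM\models\theta[\bar a^x_{a_z}]$ then $\MM\models\varphi[\bar a^x_{a_y}]\iff\MM\models\varphi[\bar a^x_{a_z}]$. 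The remaining task is to show that this unpacked statement is equivalent to Definition~\ref{def:indep-prov}.

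For the forward direction, assume $\varphi$ is non-dependent of $x$ in $\MM$ provided $\theta$ and fix $\bar a$ satisfying both $\theta$-premises above. I would set $\bar c\defeq\bar a^x_{a_y}$ and observe that $\bar c^x_{a_z}=\bar a^x_{a_z}$, so the premises read $\MM\models\theta[\bar c]$ and $\MM\models\theta[\bar c^x_{a_z}]$; applying Definition~\ref{def:indep-prov} to the sequence $\bar c$ and the element $b=a_z$ yields at once $\MM\models\varphi[\bar c]\iff\MM\models\varphi[\bar c^x_{a_z}]$, which is the desired biconditional. This direction is essentially immediate.

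For the converse, assume \eqref{eq-substituted} and fix arbitrary $\bar a\in M^\omega$ and $b\in M$ with $\MM\models\theta[\bar a]$ and $\MM\models\theta[\bar a^x_b]$; the goal is $\MM\models\varphi[\bar a]\iff\MM\models\varphi[\bar a^x_b]$. The idea is to feed a single cleverly chosen auxiliary sequence $\bar a'$ into the instance of \eqref{eq-substituted}: let $\bar a'$ agree with $\bar a$ at every position except that it carries the original $x$-component $a_x$ of $\bar a$ in its $y$-component and carries $b$ in its $z$-component (leaving the $x$-component equal to $a_x$). Then $(\bar a')^x_{a'_y}=\bar a'$, because the $x$-component is overwritten by $a'_y=a_x$ and so is unchanged, and $(\bar a')^x_{a'_z}=(\bar a')^x_b$ since $a'_z=b$. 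Moreover $\bar a'$ agrees with $\bar a$ off the positions $y,z$, and $(\bar a')^x_b$ agrees with $\bar a^x_b$ off $y,z$. The unpacked form of \eqref{eq-substituted} applied to $\bar a'$ then delivers, from its two $\theta$-premises, the biconditional $\MM\models\varphi[\bar a']\iff\MM\models\varphi[(\bar a')^x_b]$.

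The crux, and the only genuine obstacle, is transporting truth values between $\bar a'$ and $\bar a$, and between $(\bar a')^x_b$ and $\bar a^x_b$. This is precisely where the hypothesis that $y,z$ occur neither in $\varphi$ nor in $\theta$ is used: by the standard fact that satisfaction of a formula depends only on the values assigned to its free variables, perturbing the $y$- and $z$-components is invisible to both $\theta$ and $\varphi$. Hence the two $\theta$-premises needed for $\bar a'$ follow from $\MM\models\theta[\bar a]$ and $\MM\models\theta[\bar a^x_b]$, and the $\varphi$-biconditional obtained for $\bar a'$ transports back to $\MM\models\varphi[\bar a]\iff\MM\models\varphi[\bar a^x_b]$, completing the argument. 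I would be careful to record explicitly that the freshness of $y,z$ forces $y\ne x\ne z$, since this is exactly what makes $(\bar a')^x_{a'_y}=\bar a'$ hold and lets the substitution at $x$ act independently of the planted $y$- and $z$-values.
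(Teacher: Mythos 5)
Your proof is correct and takes essentially the same approach as the paper's: you unpack \eqref{eq-substituted} via Remark~\ref{rem-substitution} into the same pointwise statement, prove the forward direction by the same instantiation (your $\bar c=\bar a^x_{a_y}$ with $b=a_z$ matches the paper's $\bar a=\bar c^x_{c_j}$, $b=c_k$), and your auxiliary sequence $\bar a'$ is precisely the paper's $\bar c=(\bar a^y_{a_i})^z_b$, with the same appeal to the fact that satisfaction depends only on the values of free variables. Your explicit note that freshness of $y,z$ forces $y\ne x\ne z$ merely makes visible an assumption the paper uses implicitly.
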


\begin{proof}
  Let $x=v_i$, $y=v_j$ and $z=v_k$.

  By Remark~\ref{rem-substitution} and the definition of when a
  sequence of elements satisfies a formula in a model,
  \eqref{eq-substituted} is equivalent to that, for all $\bar c \in
  M^\omega$,
  \begin{equation}\label{eq:reformed}
    \MM\models \theta[\bar c^x_{c_j}] \text{ and } \MM\models
    \theta[\bar c^x_{c_k}] \implies (\MM\models \varphi[\bar
      c^x_{c_j}] \iff \MM\models \varphi[\bar c^x_{c_k}]). 
  \end{equation}

  Now assume that $\varphi$ is non-dependent of $x$ in $\MM$ provided
  $\theta$. Then when substituting $\bar{a}=\bar c^x_{c_j}$ and
  $b=c_k$ to \eqref{eq:indep-prov} of Definition~\ref{def:indep-prov},
  we get \eqref{eq:reformed} since $\bar a^x_b=(\bar
  c^x_{c_j})^x_{c_k}=\bar c^x_{c_k}$. This proves the ``$\implies$''
  direction.

  To show the other direction, let $\bar a \in M^\omega$ and $b\in M$
  such as $\MM\models \theta[\bar a]$ and $\MM\models \theta[\bar
    a^x_{b}]$. Let $\bar c$ be the sequence that we get form $\bar a$ by changing the $j$-th element of $\bar a$ to $a_i$ and the $k$-th element of $\bar a$ to $b$, \ie
  \begin{equation*}\bar c \defeq (\bar a^y_{a_i})^z_b = (a_1,\ldots,a_{i-1},\stackrel{i}{a_i},a_{i+1},\dots,a_{j-1},\stackrel{j}{a_i},a_{j+1},\dots,a_{k-1},\stackrel{k}{b},a_{k+1},\dots).
  \end{equation*}  
  Since satisfiability depends only on the evaluations of free
  variables, and variables $y=v_j$ and $z=v_k$ are not free in
  $\theta$ and $\varphi$, and sequences $\bar c$ and $\bar a$ differ
  only in the $j$-th and $k$-th coordinate, we have that $\MM\models
  \theta[\bar a]$ if{}f $\MM\models \theta[\bar c^x_{c_j}]$,
  $\MM\models \varphi[\bar a]$ if{}f $\MM\models \varphi[\bar
    c^x_{c_j}]$, $\MM\models \theta[\bar a^x_b]$ if{}f $\MM\models
 \theta[\bar c^x_{c_k}]$ and $\MM\models \varphi[\bar a^x_b]$ if{}f
  $\MM\models \varphi[\bar c^x_{c_k}]$. Consequently,
  \eqref{eq:reformed} reduces to \eqref{eq:indep-prov}, and hence
  \eqref{eq-substituted} implies Definition~\ref{def:indep-prov}, and
  this is what we wanted to show.
\end{proof}

\begin{figure}
  \begin{center}
    \includegraphics{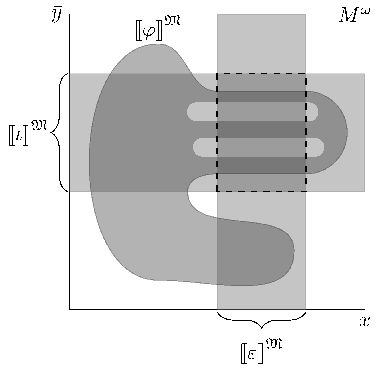}
    \caption{\label{fig-PsiIndepenenceProvidedTheta} \footnotesize{This figure
      illustrates the special case used in
      Section~\ref{sec:Application}, \ie when the provided condition
      is of the form $\theta=\iota\land\varepsilon$ for some formulas
      such that $x$ does not occur free in $\iota$ and certain
        bound variables of $\varphi$ do not occur free in
        $\varepsilon$ and $\iota$. Here $\mng{\theta}{\MM} = \mng{\iota}{\MM} \cap \mng{\varepsilon}{\MM}$ is represented by the area inside the dashed rectangle.}}
  \end{center}
\end{figure}

Now that we have connected the substitution of variables from the above examples to our notion of non-dependence, and we have established the condition $\theta$ as the conjunction $\iota\land\varepsilon$, we can proceed to show how we apply non-dependence to simplify formulas.

\begin{lemma}\label{0HWNp5}
  Let $\varphi$, $\iota$ and $\varepsilon$ be formulas such that variable $x$ does not occur free in $\iota$. Then
  \begin{align*}
   \iota \to\forall x(\iota\land\varepsilon \to \varphi) &\equiv \iota \to \forall x (\varepsilon \to \varphi) \text{ and }\\
   \iota\land\forall x(\iota\land\varepsilon \to \varphi)&\equiv \iota\land\forall x (\varepsilon \to \varphi).
  \end{align*}
\end{lemma}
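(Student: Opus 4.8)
The plan is to establish the first equivalence by moving the outer premise $\iota$ through the universal quantifier---which is exactly what the hypothesis that $x$ does not occur free in $\iota$ licenses---and then to obtain the second equivalence from the first by reusing the Boolean identities $A\land B\equiv A\land(A\to B)$ and $A\land(A\to B)\equiv A\land B$ that already appeared in the proof of Proposition~\ref{prop:indep}.

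For the first equivalence, I would start from the inner formula $\iota\land\varepsilon\to\varphi$ and rewrite it by exportation as $\iota\to(\varepsilon\to\varphi)$, so that $\forall x(\iota\land\varepsilon\to\varphi)\equiv\forall x\big(\iota\to(\varepsilon\to\varphi)\big)$. Since $x$ is not free in $\iota$, \eqref{hm-at} of Remark~\ref{rem-hinman} lets me pull $\iota$ out of the quantifier, giving $\iota\to\forall x(\varepsilon\to\varphi)$. Prefixing the outer $\iota\to{}$ then yields $\iota\to\big(\iota\to\forall x(\varepsilon\to\varphi)\big)$, and one application of exportation together with the idempotency of $\land$ collapses the repeated antecedent to the desired $\iota\to\forall x(\varepsilon\to\varphi)$.

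For the second equivalence, I would not redo the quantifier manipulation but reuse the first one. Writing $\iota\land\forall x(\iota\land\varepsilon\to\varphi)$ and applying $A\land B\equiv A\land(A\to B)$ with $A=\iota$ turns it into $\iota\land\big(\iota\to\forall x(\iota\land\varepsilon\to\varphi)\big)$; the parenthesised part is precisely the left-hand side of the first equivalence, so it may be replaced by $\iota\to\forall x(\varepsilon\to\varphi)$, and then $A\land(A\to B)\equiv A\land B$ finishes with $\iota\land\forall x(\varepsilon\to\varphi)$.

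I do not expect a genuine obstacle: the whole argument is propositional rewriting plus a single quantifier law. The one step that actually uses the hypothesis---and hence the only place where care is needed---is the appeal to \eqref{hm-at} of Remark~\ref{rem-hinman}, which is valid only because $x$ does not occur free in $\iota$; everything else is bookkeeping with the definition of $\to$ and the idempotency and exportation identities.
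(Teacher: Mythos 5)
Your proof is correct, and it takes a recognizably different route from the paper's, though both hinge on the same single use of the hypothesis. The paper proves the two equivalences by two independent direct computations: it unpacks both implications into disjunctions (definition of $\to$ plus De Morgan, turning the matrix into $\neg\iota\lor\neg\varepsilon\lor\varphi$), extracts $\neg\iota$ through the quantifier via \eqref{hm-a} of Remark~\ref{rem-hinman}, and then finishes the first item by idempotency of $\lor$ and the second by distributivity of $\land$ over $\lor$ together with $(A\land\lnot A)\lor B\equiv B$. You instead stay at the level of implications---exportation of $\iota\land\varepsilon\to\varphi$ into $\iota\to(\varepsilon\to\varphi)$ followed by \eqref{hm-at} of Remark~\ref{rem-hinman}, which is the same quantifier law as \eqref{hm-a} read through the definition of $\to$---and, more substantively, you derive the second equivalence from the first via $A\land B\equiv A\land(A\to B)$ and its converse, rather than redoing the quantifier manipulation. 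That reuse is sound (it is the same device the paper itself employs in the proof of Proposition~\ref{prop:indep}) and it makes the logical dependence between the two items explicit, at the cost of the symmetry of the paper's two parallel calculations; the paper's version, conversely, keeps each item self-contained. You also correctly isolate the only step where the hypothesis that $x$ is not free in $\iota$ is needed, which in your arrangement occurs exactly once, inside the proof of the first equivalence.
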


\begin{proof}
    \begin{align*}
      &\iota \to\forall x(\iota\land\varepsilon \to \varphi)\\
      &\equiv\neg\iota \lor\forall x(\neg\iota\lor\neg\varepsilon \lor \varphi) && \text{by the definition of $\to$ and De Morgan.}\\
      &\equiv\neg\iota \lor\neg\iota\lor\forall x(\neg\varepsilon \lor \varphi) && \text{by \eqref{hm-a} of Remark~\ref{rem-hinman}.}\\
      &\equiv\iota \to \forall x (\varepsilon \to \varphi) && \text{by indempotency and definition of $\to$.}\\
    \end{align*}
    \begin{align*}
      &\iota \land\forall x(\iota\land\varepsilon \to \varphi)\\
      &\equiv\iota \land\forall x(\neg\iota\lor\neg\varepsilon \lor \varphi) && \text{by the definition of $\to$ and De Morgan.}\\
      &\equiv \iota \land \big(\neg\iota\lor\forall x(\neg\varepsilon \lor \varphi)\big) && \text{by \eqref{hm-a} of Remark~\ref{rem-hinman}.}\\
      &\equiv\iota \land \forall x (\varepsilon \to \varphi) && \text{by the distributivity of $\land$ over $\lor$ and identity $(A\land \lnot A)\lor B\equiv B$.}\\
    \end{align*}
\end{proof}

\begin{lemma}\label{HWNp5} Let $f$ be any Boolean expression, let $\iota$, $\varepsilon$ and $\varphi_1,\dots,\varphi_n$ be formulas such that variable $x$ does not occur free in $\iota$. Then
       \begin{align}
         \bforall{u}{\iota}f\big(\forall x(\varepsilon\to\varphi_1),\dots,\forall x(\varepsilon \to\varphi_n)\big)&\equiv\bforall{u}{\iota}f\big(\forall x(\iota\land\varepsilon\to\varphi_1),\dots,\forall x(\iota\land\varepsilon\to\varphi_n)\big),\\
         \bexists{u}{\iota}f\big(\forall x(\varepsilon\to\varphi_1),\dots,\forall x(\varepsilon \to\varphi_n)\big)&\equiv\bexists{u}{\iota}f\big(\forall x(\iota\land\varepsilon\to\varphi_1),\dots,\forall x(\iota\land\varepsilon\to\varphi_n)\big).
       \end{align}

\end{lemma}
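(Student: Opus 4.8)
Write $A_i\defeq\forall x(\iota\land\varepsilon\to\varphi_i)$ and $B_i\defeq\forall x(\varepsilon\to\varphi_i)$ for the matched arguments on the two sides, and abbreviate $f(\bar A)\defeq f(A_1,\dots,A_n)$ and $f(\bar B)\defeq f(B_1,\dots,B_n)$, so that the two claimed equivalences read $\bforall{u}{\iota}f(\bar B)\equiv\bforall{u}{\iota}f(\bar A)$ and $\bexists{u}{\iota}f(\bar B)\equiv\bexists{u}{\iota}f(\bar A)$. Unpacking the bounded quantifiers via \eqref{eq-bquant}, these become $\forall u(\iota\to f(\bar B))\equiv\forall u(\iota\to f(\bar A))$ and $\exists u(\iota\land f(\bar B))\equiv\exists u(\iota\land f(\bar A))$. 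Since logical equivalence is a congruence, in particular it is preserved by prefixing $\forall u$ or $\exists u$, the plan is to reduce both to the quantifier-free claims $\iota\to f(\bar B)\equiv\iota\to f(\bar A)$ and $\iota\land f(\bar B)\equiv\iota\land f(\bar A)$.

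First I would record the elementary bridge that, for any formulas $C$ and $D$, the three statements ``$\iota\to C\equiv\iota\to D$'', ``$\iota\land C\equiv\iota\land D$'', and ``$C$ and $D$ have the same truth value at every evaluation satisfying $\iota$'' are mutually equivalent: at an evaluation where $\iota$ fails, $\iota\to C$ and $\iota\to D$ are both true while $\iota\land C$ and $\iota\land D$ are both false, so they carry no information there, whereas at an evaluation where $\iota$ holds each of $C\leftrightarrow D$, $(\iota\to C)\leftrightarrow(\iota\to D)$ and $(\iota\land C)\leftrightarrow(\iota\land D)$ expresses the same thing. By this bridge it suffices to prove the single statement $(\star)$: \emph{at every evaluation satisfying $\iota$, $f(\bar A)$ and $f(\bar B)$ have the same truth value}, since $(\star)$ then yields both quantifier-free equivalences at once, and applying $\forall u$ respectively $\exists u$ delivers the two displayed equivalences.

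The core claim $(\star)$ I would prove by induction on the structure of $f$, which by Remark~\ref{rem-abbr} I may assume is built from $\neg$ and $\land$ alone. For the base case $f=\varphi_i$, the statement $(\star)$ reads ``$A_i$ and $B_i$ agree wherever $\iota$ holds'', which is exactly the content of Lemma~\ref{0HWNp5} (either line, applied with $\varphi:=\varphi_i$); this is precisely the step that consumes the hypothesis that $x$ does not occur free in $\iota$. The induction steps are then immediate from the fact that the truth value of a Boolean combination depends only on the truth values of its immediate constituents: if at every evaluation satisfying $\iota$ the pairs $g(\bar A),g(\bar B)$ and $h(\bar A),h(\bar B)$ agree, then so do $(g\land h)(\bar A),(g\land h)(\bar B)$ and $(\neg g)(\bar A),(\neg g)(\bar B)$. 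I expect no genuine obstacle here; the only points needing care are invoking Lemma~\ref{0HWNp5} in the base case so that the non-freeness of $x$ in $\iota$ is actually available, and observing that the outer variable $u$ plays no role in $(\star)$ — the latter being a statement about all evaluations, it is inherited under both $\forall u$ and $\exists u$, which is what finally converts $(\star)$ into the two equivalences of the lemma.
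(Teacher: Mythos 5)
Your proof is correct, and while it shares the paper's outer skeleton---unpack the bounded quantifiers via \eqref{eq-bquant}, reduce to the unquantified equivalences $\iota\to f(\bar B)\equiv\iota\to f(\bar A)$ and $\iota\land f(\bar B)\equiv\iota\land f(\bar A)$, and induct on the structure of $f$ with Lemma~\ref{0HWNp5} as the base case---it handles the core of the induction in a genuinely different way. The paper runs a \emph{parallel} equational induction, carrying the $\to$-form and the $\land$-form simultaneously, and that parallelism is forced by its method: in the negation step the paper derives the $\to$-form for $\neg g$ from the $\land$-form hypothesis for $g$, and the $\land$-form for $\neg g$ from the $\to$-form hypothesis for $g$. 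Your bridge---that $\iota\to C\equiv\iota\to D$, $\iota\land C\equiv\iota\land D$, and pointwise agreement of $C$ and $D$ on evaluations satisfying $\iota$ are mutually equivalent---collapses these two intertwined statements into the single semantic invariant $(\star)$, after which the connective steps become trivial, since the truth value of a Boolean combination at a fixed evaluation depends only on the truth values of its constituents there; indeed, once $(\star)$ is established for the pairs $A_i,B_i$, strictly speaking no induction is needed at all, because any Boolean function preserves pointwise agreement of its arguments. What you lose relative to the paper is the purely equational character of the argument (the paper's proof is a chain of propositional identities, replayable in a proof calculus without semantic notions, consistent with its style elsewhere); what you gain is a shorter proof that also explains \emph{why} the paper's crossed double induction works. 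Two small points to make explicit in a final write-up: $\equiv$ here is logical equivalence, truth at all evaluations in all models, which is why it is a congruence under prefixing $\forall u$ and $\exists u$ as your last step uses; and your base case correctly consumes the hypothesis that $x$ is not free in $\iota$ only through Lemma~\ref{0HWNp5}, exactly as the paper does.
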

\begin{proof}
    Since $\bforall{u}{\iota}\varphi$ abbreviates $\forall u (\iota \to \varphi)$ and  $\bexists{u}{\iota}\varphi$ abbreviates $\exists u (\iota \land \varphi)$, it is enough to prove that
    \begin{align}\label{ind-to}
      \iota\to f\big(\forall x(\varepsilon\to\varphi_1),\dots,\forall x(\varepsilon \to\varphi_n)\big)&\equiv \iota \to f\big(\forall x(\iota\land\varepsilon\to\varphi_1),\dots,\forall x(\iota\land\varepsilon\to\varphi_n)\big),\\
   \label{ind-land}
      \iota\land f\big(\forall x(\varepsilon\to\varphi_1),\dots,\forall x(\varepsilon \to\varphi_n)\big)&\equiv\iota\land f\big(\forall x(\iota\land\varepsilon\to\varphi_1),\dots,\forall x(\iota\land\varepsilon\to\varphi_n)\big).
    \end{align}
  We are going to prove this by a parallel induction on the complexity
  of $f(\varphi_1 \dots \varphi_n)$. By Lemma~\ref{0HWNp5}, we have
  the statements \eqref{ind-to} and \eqref{ind-land} for each
  $\varphi_i$.

    Let us first assume that $f$ is of the form $f = g\land h$, and we
    already know the statements \eqref{ind-to} and \eqref{ind-land}
    for $g$ and $h$, \ie
    \begin{align}\label{ind2-g}
            \iota\to g\big(\forall x(\varepsilon\to\varphi_1),\dots\big)&\equiv \iota \to g\big(\forall x(\iota\land\varepsilon\to\varphi_1),\dots\big)\\
            \iota\land g\big(\forall x(\varepsilon\to\varphi_1),\dots\big)&\equiv \iota\land g\big(\forall x(\iota\land\varepsilon\to\varphi_1),\dots\big)\label{ind2e-g}
      \end{align}
   \begin{align}\label{ind2-h}
            \iota\to h\big(\forall x(\varepsilon\to\varphi_1),\dots\big)&\equiv\iota \to h\big(\forall x(\iota\land\varepsilon\to\varphi_1),\dots\big)\\
            \iota\land h\big(\forall x(\varepsilon\to\varphi_1),\dots\big)&\equiv \iota\land h\big(\forall x(\iota\land\varepsilon\to\varphi_1),\dots\big)\label{ind2e-h}.
      \end{align}
   Then we have \eqref{ind-to} for $f$ because of the following.
   \begin{align*}
      &\iota \to \big(g(\forall x(\varepsilon\to\varphi_1),\dots)\land h(\forall x(\varepsilon\to\varphi_1),\dots)\big)   \\
     &\equiv\big(\iota \to g(\forall x(\varepsilon\to\varphi_1),\dots)\big)\land\big(\iota \to h(\forall x(\varepsilon\to\varphi_1),\dots)\big) && \text{by the distributivity of $\to$ over $\land$.}\\
     &\equiv\big(\iota \to g(\forall x(\iota\land\varepsilon\to\varphi_1),\dots)\big)\land\big(\iota \to h(\dots)\big) && \text{by hypotheses \eqref{ind2-g} and \eqref{ind2-h}.}\\
      &\equiv\iota \to \big(g(\forall x(\iota\land\varepsilon\to\varphi_1),\dots)\land h(\forall x(\iota\land\varepsilon\to\varphi_1),\dots)\big)  &&  \text{by the distributivity of $\to$ over $\land$.}
   \end{align*}
    And we have \eqref{ind-land} for $f$ because of the following.
      \begin{align*}
       &\iota \land \big(g(\forall x(\varepsilon\to\varphi_1),\dots)\land h(\forall x(\varepsilon\to\varphi_1),\dots)\big)   \\
      &\equiv\big(\iota \land g(\forall x(\varepsilon\to\varphi_1),\dots)\big)\land\big(\iota \land h(\forall x(\varepsilon\to\varphi_1),\dots)\big) && \text{by idempotency and associativity.}\\
      &\equiv\big(\iota \land g(\forall x(\iota\land\varepsilon\to\varphi_1),\dots)\big)\land\big(\iota \land h(\dots)\big) && \text{by hypotheses \eqref{ind2e-g} and \eqref{ind2e-h}.}\\
          &\equiv\iota \land \big(g(\forall x(\iota\land\varepsilon\to\varphi_1),\dots)\land h(\forall x(\iota\land\varepsilon\to\varphi_1),\dots)\big)  &&  \text{by idempotency and associativity.}
      \end{align*}

    Let us now assume that $f$ is of the form $f = \neg g$, and we
    already know the statements for $g$. Then  we have \eqref{ind-to} for $f$ because of the following.
    \begin{align*}
     &\iota\to \neg g(\forall x(\varepsilon\to\varphi_1),\dots)   \\
     & \equiv\neg \iota \lor \neg
        g(\forall x(\varepsilon\to\varphi_1),\dots) && \text{by the definition of $\to$.}\\
      & \equiv\neg \big(\iota \land g(\forall x(\varepsilon\to\varphi_1),\dots)\big)  && \text{by De Morgan.}\\
     & \equiv\neg\big(\iota\land g(\forall x(\iota\land\varepsilon\to\varphi_1),\dots)\big)  && \text{by hypothesis \eqref{ind2e-g}.}\\
     & \equiv\iota\to\neg g(\forall x(\iota\land\varepsilon\to\varphi_1),\dots)  &&  \text{by De Morgan and definition of $\to$.}
    \end{align*}
    And we have \eqref{ind-land} for $f$ because of the following.
    \begin{align*}
     &\iota\land\neg g(\forall x(\varepsilon\to\varphi_1),\dots)   \\
      & \equiv\neg \big(\neg\iota \lor g(\forall x(\varepsilon\to\varphi_1),\dots)\big)  && \text{by double negation and De Morgan.}\\
            &\equiv\neg\big(\iota \to g(\forall x(\varepsilon\to\varphi_1),\dots)\big)  && \text{by the definition of $\to$.}\\
     & \equiv\neg\big(\iota\to g(\forall x(\iota\land\varepsilon\to\varphi_1),\dots)\big)  && \text{by hypothesis \eqref{ind2-g}.}\\
      & \equiv\neg\big(\neg\iota\lor g(\forall x(\iota\land\varepsilon\to\varphi_1),\dots)\big) &&  \text{by the definition of $\to$.}
      \\
     & \equiv\iota\land\neg g(\forall x(\iota\land\varepsilon\to\varphi_1),\dots)  &&  \text{by De Morgan and double negation.}
    \end{align*}
Since we have proven this for $\land$ and $\neg$, it follows from Remark~\ref{rem-abbr} that we have proven this for all logical connectives.
\end{proof}

\begin{thm}\label{prop-simp} Let $\MM$ be a model, let $f$ be any Boolean expression, let $\iota$ and $\varepsilon$ be formulas such that variable $x$ does not occur free in $\iota$, and let $\varphi_1,\dots,\varphi_n$ be formulas such that each of $\varphi_1,\dots,\varphi_n$ is non-dependent of variable $x$ in $\MM$ provided $\iota \land\varepsilon$ and $\MM\models \exists x \varepsilon$, and let $Q_1,\dots, Q_k$ as well as $\bar Q$ be arbitrary series of universal and existential quantifiers\footnote{We only care about the individual quantifiers $Q_1,\dots, Q_k$. Since the quantifiers in $\bar Q$ are never refered to individually, we do not need to number them (but we could have numbered them, say as $Q_{k+1},\dots, Q_{k+m}$).}, then
  \begin{multline*}
      \mng{(Q_1u_1\in \iota)\dots (Q_ku_k\in\iota)\bar Q \bar zf\big(\bforall{x}{\varepsilon}(\varphi_1),\dots,\bforall{x}{\varepsilon}(\varphi_n)\big)}{\MM}\\=\mng{(Q_1u_1\in \iota)\dots (Q_ku_k\in\iota)\bforall{x}{\varepsilon}\bar Q \bar zf\big(\varphi_1,\dots,\varphi_n\big)}{\MM}
  \end{multline*}
if no variables of $\bar z$ occur free in $\iota$ and $\varepsilon$.
\end{thm}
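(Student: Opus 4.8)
The plan is to abbreviate $\theta \defeq \iota\land\varepsilon$ and to reduce the whole identity to a single equivalence \emph{provided $\iota$}, which I then propagate through the outer block $(Q_1u_1\in\iota)\dots(Q_ku_k\in\iota)$. Since equality of meanings in $\MM$ is a congruence for every connective and quantifier (this is precisely what Remark~\ref{rem:mng-nice} encodes: each logical operation acts set-theoretically on meanings, and cylindrification depends only on the meaning set), it suffices to show that the two formulas sitting under the innermost bound $(Q_ku_k\in\iota)$ agree in $\MM$ once $\iota$ is assumed, in both the implicational and the conjunctive sense, i.e.
\begin{equation*}
  \iota\to\Psi_{\mathrm{LHS}}\equiv\iota\to\Psi_{\mathrm{RHS}}\quad\text{and}\quad\iota\land\Psi_{\mathrm{LHS}}\equiv\iota\land\Psi_{\mathrm{RHS}},
\end{equation*}
where $\Psi_{\mathrm{LHS}}=\bar Q\bar z f(\bforall{x}{\varepsilon}\varphi_1,\dots,\bforall{x}{\varepsilon}\varphi_n)$ and $\Psi_{\mathrm{RHS}}=\bforall{x}{\varepsilon}\bar Q\bar z f(\varphi_1,\dots,\varphi_n)$. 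The innermost bound unfolds either to $\forall u_k(\iota\to\,\cdot\,)$ or to $\exists u_k(\iota\land\,\cdot\,)$, so one of these two forms is exactly what is needed, and the remaining outer bounds are then attached by congruence. (This uses $k\ge 1$: there must be at least one bound to supply $\iota$.)

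To establish the inner equivalence I would transform $\Psi_{\mathrm{RHS}}$ into $\Psi_{\mathrm{LHS}}$ along a chain, carrying both the $\iota\to$ and the $\iota\land$ versions throughout. First, provided $\iota$, replace the leading $\bforall{x}{\varepsilon}$ by $\bforall{x}{\theta}$; this is exactly Lemma~\ref{0HWNp5} with $\varphi\defeq\bar Q\bar z f(\varphi_1,\dots,\varphi_n)$. Next I apply Theorem~\ref{prop-new} with this $\theta$ and the block $\bar Q\bar z$ (legitimate because each $\varphi_i$ is non-dependent of $x$ provided $\theta$ and no variable of $\bar z$ is free in $\theta=\iota\land\varepsilon$); as that theorem is an \emph{unconditional} equality of meanings, congruence lets me substitute it under $\iota$, turning $\bforall{x}{\theta}\bar Q\bar z f(\varphi_1,\dots)$ into $\exists x\theta\to\bar Q\bar z f(\bforall{x}{\theta}\varphi_1,\dots,\bforall{x}{\theta}\varphi_n)$. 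I then discard the guard $\exists x\theta$: since $x$ does not occur free in $\iota$, \eqref{hm-e} of Remark~\ref{rem-hinman} gives $\exists x\theta\equiv\iota\land\exists x\varepsilon$, and because $\MM\models\exists x\varepsilon$ this is equivalent in $\MM$ to $\iota$; hence, provided $\iota$, the implication $\exists x\theta\to(\,\cdot\,)$ collapses to $(\,\cdot\,)$. Finally I convert the bounds of the inner quantifiers back from $\theta$ to $\varepsilon$, which recovers $\Psi_{\mathrm{LHS}}$.

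The main obstacle is precisely this last conversion, and more generally the fact that the quantifier block $\bar Q\bar z$ sits \emph{between} the $\iota$-bounds and the Boolean combination $f$. This placement is what blocks a direct appeal to Lemma~\ref{HWNp5} (whose hypothesis wants the $\iota$-bound immediately above $f$) and what makes the global hypothesis $\MM\models\exists x\theta$ of Theorem~\ref{prop-new} unavailable, since I only have $\MM\models\exists x\varepsilon$ together with ``$\iota$ provided''. My way around it is to thread $\iota$ through $\bar Q\bar z$: because no variable of $\bar z$ occurs free in $\iota$, items \eqref{hm-et}--\eqref{hm-at} of Remark~\ref{rem-hinman} (and the dual conjunctive equivalences, which follow from the distributivity of $\forall$ over $\land$ used in Proposition~\ref{prop-B}) let me commute $\iota\to$ and $\iota\land$ with the entire block $\bar Q\bar z$. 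Having pushed $\iota$ down so that it sits directly above $f(\bforall{x}{\theta}\varphi_1,\dots)$, I apply the implicational and conjunctive bound-conversions already established inside the proof of Lemma~\ref{HWNp5}, namely equations \eqref{ind-to} and \eqref{ind-land}, to rewrite each $\theta=\iota\land\varepsilon$ back to $\varepsilon$, and then commute the block back out. All remaining manipulations are the routine propositional and quantifier rewrites of the kind used throughout Section~\ref{sec:def}.
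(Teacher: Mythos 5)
Your proposal is correct, and its skeleton coincides with the paper's proof: you set $\theta=\iota\land\varepsilon$, reduce to the innermost $\iota$-bounded quantifier in its two unfoldings $\forall u_k(\iota\to\cdot)$ and $\exists u_k(\iota\land\cdot)$ and propagate outward by congruence of meanings, invoke Theorem~\ref{prop-new} for the block $\bar Q\bar z$, collapse the guard via $\exists x(\iota\land\varepsilon)\equiv\iota\land\exists x\,\varepsilon$ together with $\MM\models\exists x\,\varepsilon$ (these are exactly the paper's equations \eqref{eq-C0} and \eqref{eq-C1}), and convert bounds between $\varepsilon$ and $\iota\land\varepsilon$ under the proviso $\iota$. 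Running the chain from $\Psi_{\mathrm{RHS}}$ to $\Psi_{\mathrm{LHS}}$ and opening with Lemma~\ref{0HWNp5} where the paper instead closes with exportation and \eqref{hm-at} is a cosmetic reordering. The one substantive divergence is to your credit: the paper's first step rewrites $\bforall{u_k}{\iota}\bar Q\bar z f\big(\bforall{x}{\varepsilon}\varphi_1,\dots\big)$ into $\forall u_k\big(\iota\to\bar Q\bar z f(\forall x(\iota\land\varepsilon\to\varphi_1),\dots)\big)$ citing Lemma~\ref{HWNp5}, but that lemma as stated requires the $\iota$-bound to sit immediately above the Boolean expression $f$; with the block $\bar Q\bar z$ intervening it does not apply verbatim, and the inner equivalence $f(\bforall{x}{\varepsilon}\varphi_i,\dots)\equiv f(\bforall{x}{\theta}\varphi_i,\dots)$ is not unconditional, so it cannot simply be substituted under the block either. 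Your repair --- commuting $\iota\to$ and $\iota\land$ through $\bar Q\bar z$ via \eqref{hm-e}, \eqref{hm-et} and \eqref{hm-at} (plus the conjunctive-universal analogue obtained from the distributivity of $\forall$ over $\land$), applying the equivalences \eqref{ind-to} and \eqref{ind-land} from the proof of Lemma~\ref{HWNp5}, and commuting back --- is exactly what is needed, and it makes explicit why the hypothesis that no variable of $\bar z$ occurs free in $\iota$ (and not merely in $\varepsilon$) is part of the theorem. Your parenthetical that $k\ge 1$ is required is also accurate and matches an assumption the paper leaves implicit: with $k=0$ there is no bound to supply $\iota$, and the claimed identity can fail (e.g., already for $f=\neg$ with empty $\bar Q\bar z$ when $\varphi$ is non-dependent provided $\iota\land\varepsilon$ but not provided $\varepsilon$).
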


\begin{proof}
Since $x$ does not occur free in $\iota$ and $\MM\models  \exists x \varepsilon$, by Remarks~\ref{rem-hinman} and \ref{rem:mng-nice}, we have
\begin{equation} \label{eq-C0}
\mng{\exists x (\iota\land \varepsilon)\to \psi}{\MM}
= \mng{(\iota\land \exists x \varepsilon)\to \psi}{\MM}
= \mng{\iota \to \psi}{\MM}
\end{equation}
for any formula $\psi$. Similarly,
\begin{equation}\label{eq-C1}
\mng{\iota \land \big(\exists x (\iota\land \varepsilon)\to \psi\big)}{\MM} = \mng{\iota \land \psi}{\MM}
\end{equation} because
\begin{align*}
  \mng{\iota \land \big(\exists x (\iota\land \varepsilon)\to \psi\big)}{\MM}
  &= \mng{\iota \land \big((\iota\land \exists x \varepsilon)\to \psi\big)}{\MM}
  && \text{since $x$ is not free in $\iota$.} \\
  &= \mng{\iota \land (\iota \to \psi)}{\MM}
  && \text{since $\exists x \varepsilon$ is true in $\MM$.} \\  
  &= \mng{\iota \land \psi}{\MM} && \text{by identity $A \land (A \to B)\equiv A\land B$}.
\end{align*}

By Theorem~\ref{prop-new} applied on $\theta=\iota\land\varepsilon$ and the definition \eqref{eq-bquant} of bounded quantifiers, we get the following:
  \begin{multline}\label{eq-C}
      \mng{\exists x \big(\iota\land \varepsilon\big)\to \bar Q \bar zf\big(\forall x (\iota\land\varepsilon\to\varphi_1),\dots,\forall x (\iota\land\varepsilon\to\varphi_n)\big)}{\MM} \\
      =\mng{\forall x \big(\iota\land\varepsilon \to \bar Q \bar zf(\varphi_1,\dots,\varphi_n)\big)}{\MM}.
  \end{multline}   
  
  If $Q_k=\forall$, we get the statement as follows:
    \begin{align*}
      &\mng{\bforall{u_k}{\iota} \bar Q \bar zf\big(\bforall{x}{\varepsilon}(\varphi_1),\dots,\bforall{x}{\varepsilon}(\varphi_n)\big)}{\MM} \\
      &=\mng{\forall u_k\Big(\iota\to \bar Q \bar zf\big(\forall x (\iota\land\varepsilon\to\varphi_1),\dots)\big)\Big)}{\MM} && \text{by \eqref{eq-bquant} and Lemma~\ref{HWNp5}.} \\
            &=\mng{\forall u_k\Big(\exists x (\iota\land\varepsilon)\to \bar Q \bar zf\big(\forall x (\iota\land\varepsilon\to\varphi_1),\dots\big)\Big)}{\MM}   
            &&\text{by equation \eqref{eq-C0}.}\\
      &=\mng{\forall u_k\forall x \big(\iota\land\varepsilon\to \bar Q \bar zf(\varphi_1,\dots,\varphi_n)\big)}{\MM}   &&\text{by equation \eqref{eq-C}}.\\
      &=\mng{\forall{u_k}\forall x \big(\iota\to(\varepsilon\to \bar Q \bar zf(\varphi_1,\dots,\varphi_n))\big)}{\MM}   &&\text{by exportation.}\\
      &=\mng{\forall{u_k} \big(\iota\to\forall x(\varepsilon\to \bar Q \bar zf(\varphi_1,\dots,\varphi_n))\big)}{\MM}   &&\text{by \eqref{hm-at} of Remark~\ref{rem-hinman}.}\\
           &=\mng{\bforall{u_k}{\iota}\bforall{x}{\varepsilon}\bar Q \bar zf(\varphi_1,\dots,\varphi_n)}{\MM}   &&\text{by \eqref{eq-bquant}.}
    \end{align*}
    If $Q_k=\exists$, we get the statement as follows:
    \begin{align*}
      &\mng{\bexists{u_k}{\iota} \bar Q \bar zf\big(\bforall{x}{\varepsilon}(\varphi_1),\dots,\bforall{x}{\varepsilon}(\varphi_n)\big)}{\MM} \\
    &=\mng{\exists u_k\Big(\iota \land \bar Q \bar zf\big(\forall x (\iota\land\varepsilon\to\varphi_1),\dots\big)\Big)}{\MM}    &&\text{by \eqref{eq-bquant} and Lemma~\ref{HWNp5}}.\\
      &=\mng{\exists u_k\Big(\iota \land\big(\exists x (\iota \land \varepsilon) \to \bar Q \bar zf\big(\forall x (\iota\land\varepsilon\to\varphi_1),\dots\big)\big)\Big)}{\MM}   
      &&\text{by equation \eqref{eq-C1}.}\\
            &=\mng{\exists u_k\Big(\iota \land \forall x \big(\iota\land\varepsilon\to \bar Q \bar zf(\varphi_1,\dots,\varphi_n)\big)\Big)}{\MM}   &&\text{by equation \eqref{eq-C}}.\\
      &=\mng{\exists{u_k}\big(\iota\land\forall x \big(\iota\to(\varepsilon\to \bar Q \bar zf(\varphi_1,\dots,\varphi_n))\big)\big)}{\MM}   &&\text{by exportation.}\\
      &=\mng{\exists{u_k}\big(\iota\land \big(\iota\to\forall x(\varepsilon\to \bar Q \bar zf(\varphi_1,\dots,\varphi_n))\big)\big)}{\MM}   &&\text{by \eqref{hm-at} of Remark~\ref{rem-hinman}.}\\
            &=\mng{\exists{u_k}\big(\iota\land \big(\neg\iota  \lor \forall x(\varepsilon\to \bar Q \bar zf(\varphi_1,\dots,\varphi_n))\big)\big)}{\MM} &&\text{by  definition of $\to$.}\\
      &=\mng{\exists{u_k}\big((\iota\land \neg\iota) \lor \big(\iota\land  \forall x(\varepsilon\to \bar Q \bar zf(\varphi_1,\dots))\big)\big)}{\MM}   &&\text{by distributivity of $\land$ over $\lor$.}\\
      &=\mng{\exists{u_k}\big(\big(\iota\land\forall x(\varepsilon\to \bar Q \bar zf(\varphi_1,\dots,\varphi_n))\big)\big)}{\MM}   &&\text{by identity $(A\land \lnot A)\lor B\equiv B$.}\\
        &=\mng{\bexists{u_k}{\iota}\bforall{x}{\varepsilon}\bar Q \bar zf(\varphi_1,\dots,\varphi_n)}{\MM}   &&\text{by idempotency and \eqref{eq-bquant}.}
    \end{align*}
  \end{proof}

Instead of the Lemmas of \citep[\S~11~Appendix]{diss}, Theorem~\ref{prop-simp} provides a generic alternative for simplifying translations of formulas to their desired form in the interpretations used in \citep{diss} and \citep{ClassRelKin}. 

In relation to the $\mathsf{SPR}^+$ formulation of the principle of relativity from \citep[Section 4.1]{DiffFormRelPrinc}, our approach gives an alternative point of view, namely understanding the principle of relativity as a simple variable non-dependence of certain formulas describing experiments. By Proposition~\ref{prop-reform}, in terms of variable non-dependence, $\mathsf{SPR^+}$ basically states that any formula $\varphi$ describing an experimental scenario for $x$ with some numerical parameters $\bar y$ (assuming all the free variables of $\varphi$ are among $x$ and elements of $\bar y$) is non-dependent of variable $x$ provided $x$ is an inertial observer.

We believe the above results can be useful in other situations where automatically generated formulas need to be cleaned up, as well as for the developments of algorithms for simplifying formulas.

\section*{Acknowledgments}
We are grateful to Hajnal Andr{\'e}ka, Mich{\`e}le Friend, Zal{\'a}n Gyenis, Istv{\'a}n N{\'e}meti, and Jean Paul Van Bendegem for enjoyable discussions and feedback while writing this paper.
This research was supported by the Hungarian National  Research, Development and Innovation Office (NKFIH), grants  no.\ FK-134732 and TKP2021-NVA-16.


\bibliographystyle{agsm}
\bibliography{LogRel12017}

\pagebreak
\[ \]\footnotesize
\begin{flushright}
KOEN LEFEVER\\
Centre for Logic and Philosophy of Science/Centrum Leo Apostel, Vrije Universiteit Brussel\\
\& Belgian Science Policy Office\\
\href{mailto:koen.lefever@vub.be}{koen.lefever@vub.be}\\
\url{http://lefever.space/}

\vspace{.7cm}

GERGELY SZ{\' E}KELY\\
HUN-REN Alfr{\' e}d R{\' e}nyi Institute for Mathematics\\
\& University of Public Service, Budapest, Hungary\\
\href{mailto:szekely.gergely@renyi.hu}{szekely.gergely@renyi.hu}\\
\url{http://www.renyi.hu/~turms/}
\end{flushright}

\end{document}